\newtheorem{theorem}{Theorem}
\newtheorem{remark}{Remark}
\newcommand{\review}[2][black]{%
{\color{#1}{#2}}%
}
\newcommand{\R}{\mathbb{R}}
\newcommand{\N}{\mathbb{N}}
\newcommand{\CC}{\mathcal{C}}
\newcommand{\prox}{\mathrm{prox}}
\DeclareMathOperator*{\argmin}{arg\,min}
\newcommand{\Acal}{\mathcal{A}}
\newcommand{\AcalV}{\Acal_{\mathrm{v}}}	
\newcommand{\AcalF}{\Acal_{\mathrm{f}}}	
\newcommand{\nn}{\mathsf{n}}
\newcommand{\alphaV}{\alpha_\mathrm{v}}
\newcommand{\alphaF}{\alpha_\mathrm{f}}
\newcommand{\sV}{s_{\mathrm{v}}}
\newcommand{\sF}{s_{\mathrm{f}}}
\newcommand{\sS}{\mathsf{S}}
\newcommand{\sI}{\mathsf{I}}
\newcommand{\sR}{\mathsf{R}}
\newcommand{\rS}{\rho_{\mathsf{S}}}
\newcommand{\rI}{\rho_{\mathsf{I}}}
\newcommand{\rR}{\rho_{\mathsf{R}}}
\newcommand{\hrS}{\widehat{\rho}_{\mathsf{S}}}
\newcommand{\hrI}{\widehat{\rho}_{\mathsf{I}}}
\newcommand{\hrR}{\widehat{\rho}_{\mathsf{R}}}
\newcommand{\rSo}{\rho_{\mathsf{S},0}}
\newcommand{\rIo}{\rho_{\mathsf{I},0}}
\newcommand{\rRo}{\rho_{\mathsf{R},0}}
\newcommand{\qS}{q_{\mathsf{S}}}
\newcommand{\qI}{q_{\mathsf{I}}}
\newcommand{\qR}{q_{\mathsf{R}}}
\newcommand{\hS}{h_{\mathsf{S}}}
\newcommand{\hI}{h_{\mathsf{I}}}
\newcommand{\hR}{h_{\mathsf{R}}}
\newcommand{\pS}{p_{\mathsf{S}}}
\newcommand{\pI}{p_{\mathsf{I}}}
\newcommand{\pR}{p_{\mathsf{R}}}
\newcommand{\DD}{\mathcal{D}}
\newcommand{\DDS}{\mathcal{D}_{\mathsf{S}}}
\newcommand{\DDI}{\mathcal{D}_{\mathsf{I}}}
\newcommand{\DDR}{\mathcal{D}_{\mathsf{R}}}
\newcommand{\Corner}{ \Delta_{\text{c}}^3 }
\newcommand{\llb}{\llbracket}
\newcommand{\rrb}{\rrbracket}
\DeclareMathOperator{\diag}{diag}
\DeclareMathOperator{\proj}{proj}
\journal{ }
\begin{document}

\begin{frontmatter}



\title{A Constrained Optimisation Framework for Parameter Identification of the SIRD Model\tnoteref{funds}}


\author[label_1]{Andrés Miniguano--Trujillo\corref{cor1}\orcidlink{0000-0002-0877-628X}}
\ead{Andres.Miniguano-Trujillo@ed.ac.uk}
\cortext[cor1]{Corresponding author}

\author[label_1,label_2]{John W. Pearson\orcidlink{0000-0002-6063-1766}}
\ead{j.pearson@ed.ac.uk}
\author[label_1,label_2]{Benjamin D. Goddard\orcidlink{0000-0002-8781-014X}}
\ead{b.goddard@ed.ac.uk}

\affiliation[label_1]{organization={Maxwell Institute for Mathematical Sciences, The University of Edinburgh and Heriot-Watt University}, 
            addressline={Bayes Centre}, 
            city={Edinburgh},
            state={Scotland},
            country={UK}}

\affiliation[label_2]{organization={School of Mathematics, The University of Edinburgh}, 
            addressline={James Clerk Maxwell Building}, 
            city={Edinburgh},
            state={Scotland},
            country={UK}}

\tnotetext[funds]{\textbf{Funding} A.M-T. acknowledges support from the MAC–MIGS CDT Scholarship under EPSRC grant EP/S023291/1. J.W.P. acknowledges support from EPSRC grant EP/S027785/1.}

\begin{abstract}
    We consider a numerical framework  tailored to identifying optimal parameters in the context of modelling disease propagation. Our focus is on understanding the behaviour of optimisation algorithms for such problems, where the dynamics are described by a system of ordinary differential equations associated with the epidemiological SIRD model. 
    Applying an optimise--then--discretise approach, we examine properties of the solution operator and determine existence of optimal parameters for the problem considered. Further, first--order optimality conditions are derived, the solution of which provides a certificate of goodness of fit, which is not always guaranteed with parameter tuning techniques.
We then propose strategies for the numerical solution of such problems, based on projected gradient descent, Fast Iterative Shrinkage--Thresholding Algorithm (FISTA), nonmonotone Accelerated Proximal Gradient (nmAPG),
and limited memory BFGS trust region approaches. We carry out a thorough computational study for a range of problems of interest, determining the relative performance of these numerical methods. 
    Our results provide insights into the effectiveness of these strategies, contributing to ongoing research into optimising parameters for accurate and reliable disease spread modelling. Moreover, our approach paves the way for calibration of more intricate compartmental models.
\end{abstract}



\begin{keyword}
Mathematical epidemiology \sep SIRD model \sep parameter identification \sep optimisation of systems of ODEs \sep quasi--Newton methods

\end{keyword}
\end{frontmatter}



Compartmental models form a well--established approach for modelling a wide range of systems in mathematical biology \cite{Perthame2015,Garfinkel2017,Brauer2012}. In recent years, there has been a breadth of research into state--of--the--art compartmental models devised by mathematical biologists, e.g., \cite{Cheng2023,Elzinga2023,Moffett2023,Penn2023,Viscardi2023,CuevasMaraver2024,Barua2023}. In particular, a driver for the recent increase in the study of mathematical problems from epidemiology was the COVID--19 pandemic. We note many important contributions to this subject, e.g., \cite{behncke2000optimal, kantner2020beyond,Ketcheson2021,balderrama2022optimal,britton2023optimal,molina2022optimal,freddi2023infinite}. Typically, such approaches begin with a compartment model from mathematical biology describing the proportion of individuals in the system who belong to each of a certain set of classes, such as infected by a disease, recovered from that disease, or currently uninfected.

Assuming that the parameters for such a compartmental model are known, the dynamics and resultant predictions for mathematical biology and epidemiology are well--studied.  However, the identification of these parameters is a highly non--trivial task, and the dynamics of compartmental models are often highly sensitive to these values.  In principle, such parameters should be extracted from data, such as historical infection trends. There is some recent work in addressing this issue \cite{Bortz2023,Roy2024,Roy2022,Marino2008,Saldana2023}. A recent area of interest involves a related \emph{inverse problem}: given a particular target state (or ideal outcome) for the dynamics, can one systematically identify a set of system parameters that drive the biological system as close as possible to that state? 
For instance, we may wish to minimise some property of the system, such as the cumulative number of infected individuals over time, or the total number of deceased individuals after a given time. In practice, this cost may be balanced by one or more constraints, such as bounds on input variables (e.g., limited capacity for quarantine or hospital treatment), or an additional cost term beyond the closeness to the desired dynamics (e.g., total duration of lockdowns, or another measure of social dissatisfaction).

As inverse problems often exhibit more complicated dynamics than analogous forward problems, it is of crucial importance to understand how such problems can be tackled numerically. The main contribution of this work is to devise and examine the behaviour of optimisation algorithms for such systems, so that we may gather a keen understanding of how best to generate rapid and realistic simulations of epidemiological processes, with a view to identifying biological dynamics and making policy decisions. The examination of such algorithms for realistic applications can provide a springboard to resolving inverse problems arising from more complicated constraints, such as models with additional compartments, diffusive SIRD--based models, and nonlocal partial differential equations.

The model we shall consider in this paper is the standard Susceptible, Infected, Recovered, and Deceased model (SIRD) \cite{Kermack-1927,Bailey1975}, a system of three compartments whose evolution is governed by three ordinary differential equations (ODEs). Here, Susceptible individuals \((\rS)\) are infected at a rate \(\beta \rI\), where \(\beta\) is the effective transmission rate, and Infected persons  \((\rI)\) recover at rate \(\gamma\) and die at rate \(m\). Recovered persons  \((\rR)\) are understood to be immune to the disease. The governing equations are
\begin{equation}
	\od{\rS}{t} = -\beta \rS \rI,
	\qquad
	\od{ \rI }{t} = \beta \rS \rI - \gamma \rI - m\rI,
	\qquad
	\od{ \rR }{t} = \gamma \rI,
	\label{sys:SIRD}
\end{equation}
where the parameters \(\beta\), \(\gamma\), and \(m\) are understood to be non--negative rates in \([0,1]\). Notice that the scaling of this interval can change for each parameter if we consider a different time scale or normalise the equations in terms of population proportions. However, this is not a limitation for the analysis or applicability of the system. Moreover, we comment that the model applies only to timescales or infections where we can ignore the deaths of susceptible and recovered individuals; this is a standard assumption when using such models.

As the problem of key interest in this paper is the \emph{parameter identification} of such a model, we wish to determine the parameter values that best capture the epidemiological dynamics of the system \eqref{sys:SIRD}. 
The need of reliable data--driven methods for parameter identification became more apparent in the wake of the COVID--19 pandemic. 
For instance, in \cite{CuevasMaraver2024} a vaccination model is calibrated using structural identifiability and differential algebra \cite{Miao2011} to fit against data from fatalities in Switzerland and Andalusia.
In \cite{Barua2023} a vaccination model with immunisation is tuned with data from Hungary using the Latin hypercube sampling method, which is a common sampling method for parameter identification \cite{Roy2024}. 
The recent paper \cite{HarunOrRashidKhan2024} fits data from fatalities in Bangladesh 
against a variant of \eqref{sys:SIRD} (with additional equal death and birth rates) using a nonlinear least--squares approach through the default unconstrained MATLAB optimiser \texttt{fminsearch}. 
The authors then fix the fitted parameters and extend the model to include time--dependent controls to study the effects of face--mask usage, vaccinations, and testing rates. The controls are then determined using a forward--backward sweep
; see \cite[Chapter 4]{Lenhart2007}. 
In \cite{Petrica2023}, an averaged neural network is used to fit a regularised infection count of Romania, Hungary, Czech Republic, and Poland under the assumption of partial reporting. 
The neural networks are trained by simulating 
around 24 million
SIRD curves with random parameters selected inside user--guided intervals for a seven--day rollout; see also \cite{Dua2011,Pham2020}.
The prediction is used to forecast the disease based on a (time--dependent) piece--wise constant approximation of the parameters. Although an inverse problem (based on deceased count; see \cite{Petrica2022}) is analysed and proven to be well--posed, the data--driven approach is used instead for robustness.
Constant--valued parameters for \eqref{sys:SIRD} were calibrated in \cite{Fanelli2020} for the outbreaks in China and Italy. Here, the stochastic Differential Evolution heuristic \cite{Storn1997} was employed for determining a least--squares fitting which also considered some additional parameters associated with the initial condition. 
In \cite{Anastassopoulou2020}, an algebraic computation and MATLAB's implementation of the Levenberg--Marquardt algorithm \cite[\SS 2.2]{Oezisik2021} is used to fit the discrete SIRD dynamics.
Constant--in--time parameters are then used to provide a snapshot of the early evolution of the pandemic in China.
Time--dependent parameters that are constant across days are studied in \cite{Pacheco2021} using a function estimation approach.
The Levenberg--Marquardt method is employed with a Tikhonov regularisation parameter to analyse data from Italy and Brazil.
%
%
%
A constrained least--squares method validated with uncertainty quantification estimates is used for Netherlands and Spain transmission data in \cite{Smirnova2020}. The fitted model is a variant of \eqref{sys:SIRD} with sigmoidal contact rates influenced by fatality data \cite{Atkeson2020}. Here a predictor--corrector algorithm is used, combining all--at--once methods with the optimisation of a reduced cost functional inside the scope of alternate or block--coordinate minimisation \cite[Chapter 14]{Beck2017}. The approach is used to estimate a time--dependent disease contact rate while the other epidemiological parameters are fixed. We note that an age--structured variant of \eqref{sys:SIRD} is studied in \cite{Dutta2021}. Here, data from England and France are used to calibrate the model parameters using an Approximate Bayesian Computation (ABC) approach \cite{Beaumont2010}. We also highlight the wide applicability of parameter identification technologies for other problems in biology; see \cite{KahleLam2020,MiaoZhu2024} among many recent works.  

A common aspect of the aforementioned works is that the models are treated in a discrete setting. Clearly, this is understandable from the point of view of data collection as epidemiological data is reported on a periodic basis. 
However, it is desirable to treat the parameter calibration problem using a numerical approach that reflects the infinite--dimensional structure of the original system, meaning we may have a concrete guide as to when we (approximately) reach an optima. 
This is the focus of the present work. 
We will tackle the parameter identification problem using an optimise--then--discretise approach, meaning we will work with the model in a functional setting and derive certificates of optimality that analytically describe an optimal calibration.
This approach has a two--fold advantage: we can evaluate the optimality of solutions from a continuous perspective while also preserving the structural properties expected from the continuous system (e.g., positivity, conservation of mass).
We will focus on determining constant--in--time parameters (a reasonable assumption for small periods of time like a wave of infections) and present some insights about the more traditional time--dependent parameter problem. 
The fact that the studied parameters must stay constant in time represents an additional challenge as full domain information is required for their determination. This means that nonlocal interactions of the model quantities govern the optimality conditions. As a result, tailored algorithms are needed to incorporate the nonlocal terms into the optimisation routines. By contrast, optimality systems for time--dependent parameters provide pointwise information about the parameters, allowing the use of standard all--at--once and sweeping methods.
Finally, we note that our approach is agnostic to data, so we can determine suitable parameters for any dataset that is suitable for \eqref{sys:SIRD}.

The core contributions of this work are the analysis of such a parameter identification problem, and the adaptation of robust numerical methods for obtaining optimal parameters for the continuous model, by treating this problem on the infinite--dimensional level. We apply a standard Projected Gradient Descent approach as a benchmark for our numerical study, and propose Fast Iterative Shrinkage--Thresholding Algorithm (FISTA), nonmonotone Accelerated Proximal Gradient (nmAPG), and non--convex limited-memory BFGS (LM--BFGS) to improve computational performance.

This paper is structured as follows. In \Cref{sec:main} we state the optimisation problem of interest, analyse properties of the solution operator, and demonstrate the existence of an optimal control. In \Cref{sec:opt}, we derive necessary optimality conditions. In \Cref{Sec:Algorithms}, we outline the Projected Gradient Descent, FISTA, nmAPG, and LM--BFGS algorithms to be applied. In \Cref{sec:numerics}, we provide a systematic numerical study of optimisation algorithms applied to such epidemiological systems, centred on several problems of practical relevance, and in \Cref{sec:conclusions}, we provide some concluding remarks.

\newpage
\section{Optimisation problem}
\label{sec:main}

Let \( T \in \R^+\) be the final time and \(\rho_0 = \begin{psmallmatrix} \rho_{\mathsf{S},0} & \rho_{\mathsf{I},0} & \rho_{\mathsf{R},0} \end{psmallmatrix}^\top \in \R^3_{\geq 0}\) be the initial state.
Denote by \( \Acal \coloneqq \AcalV \times \AcalF \) the space of parameters. 
The set \( \AcalV\) consists of parameters that are unknown to the modeller and may either be constant or time--dependent. Conversely, \( \AcalF \) contains parameters that are always fixed. This construction allows us to examine the parameter identification problem driven by the dynamics of \eqref{sys:SIRD} whenever \(\beta\), \(\gamma\), or \(m\) are fixed, constant, or time--dependent. Let us further denote by \( \sV \) and \( \sF \) the number of parameters represented by \(\AcalV\) and \(\AcalF\), respectively.

We can write \eqref{sys:SIRD} as the system of ordinary differential equations
\begin{equation}
\label{eq:ode-sys}
	\od{\rho}{t} = f(\rho,\alpha,t)	\qquad \text{for } t\in (0,T),
    \qquad\text{and}\qquad \rho(0) = \rho_0.
\end{equation}
Here, \(\alpha\) is a vector of parameters, \(\rho\) is a vector field whose entries are identified as the triplet \( (\rS, \rI, \rR)^\top \hspace{-0.3em} \), and \(f: \R^3\times \Acal \times [0,T] \to \R^3\) is the function given by the right hand--side of each equation in \eqref{sys:SIRD}. Specifically, we have that
\(
	f(\rho,\alpha,t) \coloneqq
	\begin{pmatrix}
		-\beta \rS \rI & \beta \rS \rI - \gamma \rI - m \rI & \gamma \rI
	\end{pmatrix}^\top \hspace{-.3em}.
\)

Let \(\widehat{\rho} \in [L^2(0,T)]^3\) be a target state and \(\alphaF \in \AcalF\) be a vector of fixed parameters. Our goal is to find a trajectory \(\rho\), that is a response to the system \eqref{eq:ode-sys} controlled by \(\alphaV \in \AcalV\). 
We aim to minimise the following payoff functional of tracking type:
\begin{equation}
\label{eq:general_objective}
	J(\rho,\alphaV) \coloneqq \int\limits_0^T r( \rho, \alphaV ) \dif t,
\end{equation}
where \(r: \R^3 \times {\R^{\sV}} \to \R\) is a running payoff (for example, we will be interested in variants of \( r(\rho, \alphaV) = \frac{1}{2} \|\rho - \widehat{\rho} \|^2 \),
which is a standard choice for calibrating ecological parameters \cite{Saldana2023}). For simplicity, we do not include a terminal payoff in the cost functional \eqref{eq:general_objective} at this stage.

\begin{remark}
    Up to this point, the underlying function spaces for \(\Acal\) have been left unspecified, which is a modelling choice to allow for flexibility. For instance, we could optimise all the parameters in \eqref{sys:SIRD} as time--independent constants, meaning \( \AcalV = [0,1]^3\) and \( \AcalF = \varnothing\). Alternatively, we could optimise the contact rate as a time--dependent variable while retaining the other parameters fixed, yielding \( \AcalV = L^2(0,T)\) and \( \AcalF = [0,1]^2\). Other mixed options are also possible; however, we will assume that \( \AcalV\) is nonempty.
    
    In this paper, we will focus on the time--independent case,  with the exception of the numerical experiment in \Cref{sec:DD_PI}. Although it may seem a simplification of the framework, we will see that the conditions for optimising \eqref{eq:general_objective} require the evaluation of a nonlocal (integral) equation. As a result, constant--in--time parameter identification yields an additional complexity to be considered as we find discretised solutions that are not present in the pointwise characterisation of solutions for the time--dependent case.
\end{remark}

\subsection{Solution operator}

In this subsection, we will use scaled variables to obtain appropriate bounds, estimates, and qualitative information regarding the solution operator of system \eqref{sys:SIRD} and its corresponding derivative. This will allow us to show, under suitable conditions, that the optimal control problem under the payoff functional \eqref{eq:general_objective} is well posed. We will turn back to the original unscaled variables in the later sections.

Throughout this work, we will assume that (a) there is an initial population \(\nn \geq 1\) such that \( \nn = \sum \rho_0\), and (b) \(0 < \rSo, \rIo < \nn\), and \( \rRo \in [0, \nn)\).

If we scale \(\rho\) by \(\nn\), i.e., \(\rho = \nn \, \widetilde \rho\), we obtain the following system:
\begin{equation}
	\od{\rS}{t} = -[{\nn}{\beta}] \rS \rI,
	\qquad
	\od{ \rI }{t} = [{\nn}{\beta}] \rS \rI - \gamma \rI - m\rI,
	\qquad
	\od{ \rR }{t} = \gamma \rI.
	\label{sys:scaled-SIRD}
\end{equation}
Notice that here we have dropped the tilde decorator from \(\rho\) to represent its scaling by \(\nn\), and we will do so for the remainder of this section. 

Let us define the 3--dimensional corner of a cube given by
\(
	\Corner \coloneqq \big\{ \rho \in \R^3:\, \|\rho\|_1 \leq 1, \rho \geq 0 \big\}.
\)
In order for system \eqref{sys:scaled-SIRD} to be epidemiologically well posed in the sense of Hethcote \cite{Hethcote1976}, we expect that for any initial condition in the set \(\Corner\) we can obtain a solution of the initial value problem that stays inside \(\Corner\) for any later time \(t>0\). 
In other words, we expect that any vector field solution \(\rho\) of \eqref{sys:scaled-SIRD}, with non--negative initial values in \(\Corner\), remains non--negative for all later times, and the cumulative sum of the compartments will never exceed the maximum population density.
Let us discuss the existence of such vector fields, the first main result of this section, which follows from the compactness of \(\Corner\) and the regularity of the right hand--side of \eqref{sys:scaled-SIRD}.

\begin{theorem}
\label{th:state_existence}
	Given \(\alpha \in \Acal\) and \(\rho_0 \in \Corner\), there exists a unique solution \(\rho \in \mathcal{C}^\infty\) solving \eqref{sys:scaled-SIRD} inside a time interval of positive length.
\end{theorem}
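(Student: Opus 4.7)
The plan is to apply the classical Picard--Lindelöf (Cauchy--Lipschitz) theorem and then bootstrap regularity. Focusing on the time--independent setting emphasised in the preceding remark, the right--hand side of \eqref{sys:scaled-SIRD} is a quadratic polynomial in \(\rho\) with coefficients depending on \(\alpha\), so \(f(\cdot,\alpha,\cdot)\) is autonomous and jointly \(\mathcal{C}^\infty\) in \((\rho,\alpha)\).

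Since \(\Corner\) is compact and \(f\) is polynomial in \(\rho\), its Jacobian \(\partial_\rho f\) is bounded on any bounded open neighbourhood \(U\) of \(\Corner\), with Lipschitz constant depending only on the norm of \(\alpha\) and \(\sup_{\rho \in U}\|\rho\|\). Picard--Lindelöf then yields a unique absolutely continuous solution \(\rho\) on a maximal interval \([0,\tau)\) of positive length, for any \(\rho_0 \in \Corner \subset U\).

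For the smoothness claim, I would proceed by induction. Since \(\rho \in \mathcal{C}^0([0,\tau))\) and \(f\) is smooth, the equation \(\dot \rho = f(\rho,\alpha)\) implies \(\dot \rho \in \mathcal{C}^0\), whence \(\rho \in \mathcal{C}^1\). Assuming \(\rho \in \mathcal{C}^k\), the chain rule applied to \(\dot \rho = f(\rho,\alpha)\) expresses \(\dot \rho\) as a composition of smooth maps with a \(\mathcal{C}^k\) function, yielding \(\dot \rho \in \mathcal{C}^k\) and hence \(\rho \in \mathcal{C}^{k+1}\). Iterating gives \(\rho \in \mathcal{C}^\infty([0,\tau))\).

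The main obstacle is essentially interpretive rather than technical: the result is a direct consequence of standard ODE theory once one observes that the vector field is polynomial. The delicate point lies in pinning down the regularity of \(\Acal\) — for time--dependent parameters with only \(L^2\) regularity in \(t\), solutions would only be absolutely continuous and the \(\mathcal{C}^\infty\) conclusion would fail without extra smoothness assumptions on \(\alpha(t)\). Under the constant--in--time assumption highlighted by the authors, no such difficulty arises and the statement follows.
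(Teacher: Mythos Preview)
Your argument is correct and follows essentially the same route as the paper: the vector field is polynomial (hence smooth and autonomous), compactness of \(\Corner\) gives a local Lipschitz bound, Picard--Lindel\"of yields local existence and uniqueness, and smoothness of \(\rho\) then follows by bootstrapping regularity (the paper simply cites a standard ODE regularity result rather than spelling out the induction). Your closing remark on the time--dependent case is a useful clarification that the paper also addresses later.
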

\begin{proof}
	Let \(\alpha \in \Acal\) and \(\mathsf{f}(\rho,\alpha,\cdot)\) be defined by the right hand--side of \eqref{sys:scaled-SIRD}. We observe that \(\mathsf{f}\) defines an autonomous system of differential equations. Moreover, each entry of \(\mathsf{f}\) is a polynomial, hence \(\mathsf{f}\) is of class \(\mathcal{C}^1\). Moreover, we have that \(\Corner\) is compact and thus \( \mathsf{f} \) is Lipschitz in \(\Corner\) for any value of \(t\). As a result, there exists an interval \( I \), containing \(t=0\), for which there exists a unique solution for problem \eqref{sys:scaled-SIRD} \cite[Ch. 6 \S11]{Birkhoff1_1991}.
	
	Moreover, due to ODE regularity theory of analytic autonomous systems, we obtain that, as \(\mathsf{f}\) is of class \(\mathcal{C}^\infty\), then \(\rho\) is also of class \( \mathcal{C}^\infty\) on its interval of existence
	\cite[Ch. 6 \S9]{Birkhoff1_1991}.
\end{proof}

Global existence can be proved by determining that \(\Corner\) is an invariant region for \eqref{sys:scaled-SIRD}; i.e., any solution of \eqref{sys:scaled-SIRD} with starting point in \(\Corner\) stays within \(\Corner\) for all later times \(t > 0\). The ecological validity of the SIRD system is a direct consequence of this property. For this, let us recall the following theorem on invariant regions:

\begin{theorem}[Positively Invariant Regions, (\cite{Amann_1990}, Ch. 4 \S16)]
\label{Th:Invariant-Regions}
	Let \(G \in \CC^1(U,\R)\) such that \( \nabla G(u) \neq 0\) for all \( u \in G^{-1} (0)\); i.e., \(0\) is a regular value of \(G\). Then the region \( M \coloneqq \{ u\in U:\, G(u) \leq 0 \}\) is positively invariant for solutions of 
	the initial value problem \( \od{u}{t} = \mathsf{F}(u) \), \(u(0) = u_0\),
	 if and only if
	 \[
	 	\big\langle \nabla G(u), \mathsf{F}(u) \big\rangle \leq 0
		\qquad\qquad
		\forall u \in \partial M = G^{-1} (0).
	 \]
	 In particular, suppose \(0\) is a regular value of the family \( \{ G_i \}_{i \in \llb 1, k\rrb} \subset \CC^1 (U,\R) \), and we let \(M \coloneqq \displaystyle\bigcap_{i\in \llb1,k\rrb} \{ u\in U:\, G_i(u) \leq 0 \}\). If we have 
	 \[
	 	\big\langle \nabla G_i(u), \mathsf{F}(u) \big\rangle \leq 0
		\qquad\qquad
		\forall u \in \partial M = G_i^{-1} (0), \quad \forall i \in \llb 1,k\rrb,
	 \]
	 then \(M\) is positively invariant for solutions of the initial value problem with \(u_0 \in M\).
\end{theorem}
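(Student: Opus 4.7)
The plan is to prove the single-constraint biconditional first and then extend to the intersection case face by face.

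For the forward direction (assuming $M$ is positively invariant), I would pick $u_0 \in \partial M = G^{-1}(0)$ and differentiate $t \mapsto G(u(t))$ from the right at $t=0$, where $u$ is the solution of $\od{u}{t} = \mathsf{F}(u)$ with $u(0) = u_0$. Since $u(t) \in M$ for small $t \geq 0$, we have $G(u(t)) \leq 0 = G(u_0)$, and the chain rule identifies the one-sided derivative of $G \circ u$ at $0^+$ with $\langle \nabla G(u_0), \mathsf{F}(u_0)\rangle$, which must therefore be $\leq 0$.

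For the converse, suppose $\langle \nabla G, \mathsf{F}\rangle \leq 0$ on $\partial M$ and assume for contradiction that some orbit $u$ with $u(0) \in M$ satisfies $u(t_1) \notin M$ for some $t_1 > 0$. Setting $\tau \coloneqq \inf\{t \in (0,t_1] : G(u(t)) > 0\}$, continuity gives $u(\tau) \in \partial M$, and a sequence $t_n \downarrow \tau$ with $G(u(t_n)) > 0$ forces the right-derivative $\langle \nabla G(u(\tau)), \mathsf{F}(u(\tau))\rangle \geq 0$. Combined with the hypothesis this yields equality, and this borderline tangential case is the main obstacle: a trajectory can graze the boundary without the difference-quotient argument alone detecting an actual exit. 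I would resolve it by perturbation: since $\nabla G \neq 0$ on $G^{-1}(0)$, the field $\mathsf{F}_\varepsilon \coloneqq \mathsf{F} - \varepsilon \nabla G$ (smoothly extended off a tubular neighbourhood of $\partial M$) satisfies the strict inequality $\langle \nabla G, \mathsf{F}_\varepsilon\rangle \leq -\varepsilon |\nabla G|^2 < 0$ on $\partial M$, so the first-exit-time argument above gives a genuine contradiction for the perturbed system. Continuous dependence of ODE solutions on parameters as $\varepsilon \to 0^+$ then transfers positive invariance back to $\mathsf{F}$.

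For the intersection $M = \bigcap_i \{G_i \leq 0\}$, the argument reduces to the single-constraint case face by face. If an orbit starting in $M$ escapes, at its first exit time $\tau$ the point $u(\tau)$ lies on at least one face $G_i^{-1}(0)$ while still belonging to $M \subset M_i$, and the hypothesis at $u(\tau) \in \partial M_i$ activates the single-constraint contradiction for that particular index. Corners at which several $G_i$ vanish simultaneously present no additional difficulty since the subtangential inequality is imposed on every active constraint; throughout, the delicate tangential case is handled uniformly by the same perturbation argument applied face-wise.
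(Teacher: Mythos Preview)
The paper does not prove this theorem: it is quoted verbatim from Amann's text (Ch.~4 \S16) and invoked as a black box in the subsequent proof of \Cref{Th:ODE_Qualitative}. There is therefore no ``paper's own proof'' to compare against.

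That said, your sketch is the standard Nagumo-type argument and is essentially correct. Two minor remarks. First, in the intersection case you write that ``the hypothesis at $u(\tau) \in \partial M_i$ activates the single-constraint contradiction for that particular index''; strictly speaking the single-constraint biconditional you just proved requires the subtangential condition on \emph{all} of $G_i^{-1}(0)$, not only at the one point $u(\tau)$. Under the reading intended here (the condition is imposed on each full level set $G_i^{-1}(0)$, which is how the paper applies the result in \Cref{Th:ODE_Qualitative}), the intersection case is even simpler than you suggest: each $M_i$ is positively invariant by the single-constraint part, and an intersection of positively invariant sets is positively invariant. Second, your perturbation $\mathsf{F}_\varepsilon = \mathsf{F} - \varepsilon\nabla G$ handles the tangential case cleanly, but you should note that the strict-inequality version of the argument only needs the subtangential condition \emph{at the single boundary point} $u(\tau)$, so the face-by-face reduction does not require you to re-run the perturbation globally on each $M_i$; it suffices to perturb locally near $u(\tau)$, which also covers corners without extra work.
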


Using \Cref{Th:Invariant-Regions}, we arrive at the following result:

\begin{theorem}
\label{Th:ODE_Qualitative}
	The corner of a cube \(\Corner\) is a positively invariant region for the initial value problem \eqref{sys:scaled-SIRD} with \(\rho_0 \in \Corner\) and \(\alpha \in \Acal\).
\end{theorem}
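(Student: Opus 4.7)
The plan is to apply \Cref{Th:Invariant-Regions} directly, writing $\Corner$ as the intersection of four sub-level sets of affine functions. First I would set
\begin{equation*}
G_1(\rho) = -\rS, \qquad G_2(\rho) = -\rI, \qquad G_3(\rho) = -\rR, \qquad G_4(\rho) = \rS + \rI + \rR - 1,
\end{equation*}
so that $\Corner = \bigcap_{i=1}^4 \{ G_i \leq 0 \}$. The gradients $\nabla G_i$ are constant and nonzero, hence $0$ is a regular value of each $G_i$, and the hypotheses of \Cref{Th:Invariant-Regions} on the family $\{G_i\}$ are satisfied. Local smoothness of the vector field on the right hand--side of \eqref{sys:scaled-SIRD}, call it $\mathsf{f}$, was already established in the proof of \Cref{th:state_existence}.

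Next I would verify the inward-flow condition $\langle \nabla G_i, \mathsf{f}\rangle \leq 0$ on each of the four relevant faces of $\partial \Corner$. On $G_1^{-1}(0)$ we have $\rS = 0$, so $\langle \nabla G_1, \mathsf{f}\rangle = \nn\beta \rS \rI = 0$; on $G_2^{-1}(0)$ we have $\rI = 0$, which kills the only nontrivial entry of $\mathsf{f}$ in that row and gives $0$ as well. On $G_3^{-1}(0)$ one obtains $\langle \nabla G_3, \mathsf{f}\rangle = -\gamma\rI \leq 0$ since $\gamma\geq 0$ and $\rI\geq 0$ on $\Corner$. The only nontrivial calculation is on the hyperplane face $G_4^{-1}(0)$, where a cancellation of the $\nn\beta\rS\rI$ transmission terms occurs and one is left with
\begin{equation*}
\langle \nabla G_4, \mathsf{f}\rangle = -m\,\rI \leq 0,
\end{equation*}
which is again non-positive because $m \geq 0$ and $\rI \geq 0$ on $\Corner$. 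This exhausts the cases and yields positive invariance.

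The main obstacle, such as it is, lies in that last face: one has to notice that the infection term enters with opposite signs in the $\rS$ and $\rI$ equations and cancels when summed, so that only the death rate $-m\rI$ survives. This is what makes the epidemiological interpretation of the bound $\|\rho\|_1 \leq 1$ consistent: the total scaled population can only decrease, and does so at the rate at which infected individuals die. Finally, I would remark that since $\Corner$ is compact and $\rho$ remains in $\Corner$ on its interval of existence, the local solution from \Cref{th:state_existence} extends uniquely to all $t \geq 0$ by the standard continuation argument, so the invariance statement is not merely formal but applies to a genuinely global trajectory.
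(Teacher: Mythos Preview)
Your proposal is correct and follows essentially the same approach as the paper: both write $\Corner$ as the intersection of the sub-level sets of the four affine functions $-\rS$, $-\rI$, $-\rR$, and $\rS+\rI+\rR-1$, verify the regularity of $0$, and check the inward-flow condition face by face, arriving at the same key cancellation $\langle \nabla G_4,\mathsf{f}\rangle = -m\rI \leq 0$ on the hyperplane face. The only cosmetic difference is that the paper proceeds in two passes (first establishing nonnegativity via the three coordinate faces, then invoking $\rI\geq 0$ to handle the simplex face), whereas you apply the multi-constraint version of \Cref{Th:Invariant-Regions} in one shot; your remark on global continuation via compactness of $\Corner$ also appears in the paper.
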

\begin{proof}
Let us consider the following region \( \Sigma_0 = \{ \rho \in \R^3: \, \rho \geq 0 \}\). We claim that \(\Sigma_0\) is an invariant set. To see this, define \( G_{0, \sS} (\rho) \coloneqq -\rS\) and notice that \( \nabla G_{0, \sS} (\rho) = ( -1, 0, 0 )^\top\) never vanishes. Then
\[
	\big\langle \nabla G_{0, \sS}(\rho), \mathsf{f}(\rho) \big\rangle \big|_{\rS = 0} 
	= [{\nn}{\beta}] \rS \rI \big|_{\rS = 0} = 0,
\]
so we can conclude that \( \rS \geq 0\). Similarly, letting \( G_{0, \sI} (\rho) \coloneqq -\rI\) and noticing that any \(\rho\) with \( \rI = 0\) is an equilibrium, we derive that \( \rI \geq 0\). Finally, letting \( G_{0, \sR} (\rho) \coloneqq -\rR\), we similarly obtain \( \rR \geq 0\). By the theorem on positively invariant regions, we conclude that \(\rho \geq 0\) component--wise. In other words, we obtain that all the solutions of model \eqref{sys:scaled-SIRD} are nonnegative for nonnegative initial conditions.

Now let us consider the region with the upper bound \( \rho \leq 1\) and the additional constraint \( \|\rho\|_1 \leq 1 \), for which we define
\( G_1 (\rho) \coloneqq \rS + \rI + \rR - 1\). We then obtain
\[
	\big\langle \nabla G_{1}(\rho), \mathsf{f}(\rho) \big\rangle = \od{ (\rS + \rI + \rR) }{t} =  -m\rI \leq 0
\]
which holds as \( \rI \geq 0\). We can conclude that \( \rho\) is component--wise bounded from above; i.e., \( \rho \leq 1\). 

Therefore \(\Corner\) is positively invariant: an orbit that escapes \(\Corner\) would have to escape at some boundary point, and there is no boundary point at which that is possible. This, furthermore, implies that \eqref{sys:scaled-SIRD} has a global solution in \(\Corner\) for all \(T>0\).
\end{proof}

To finish this section, we will study some pointwise bounds, and some corresponding energy estimates, relating every choice of \(\alpha \in \Acal\) and the solution of the ODE system \eqref{sys:scaled-SIRD}. For this, we can just make use of Grönwall's inequality \cite[Ch. 1 \S\S10--11]{Birkhoff1_1991}, \cite[Appendix B]{Evans_2010}.

Consider the qualitative properties of a solution for the initial value problem \eqref{sys:scaled-SIRD} with \( \rho_0 \in \Corner\). 
The right hand--side of the equation governing the susceptibles compartment is always non--positive. As a result, using Grönwall's inequality, we obtain that
\(
	\rS(t) \leq \rSo .
\)
Integrating over \([0,T]\), we also obtain the energy estimate \( \|\rS\|_{L^2(0,T)}^2 \leq T \rSo^2  \). In what follows, we will simplify our notation and denote \( \| \cdot \|_{p} \) as the \(L^p(0,T)\) norm unless otherwise specified.
For the infected compartment, we notice that 
\(
	\od{ \rI }{t} \leq [{\nn}{\beta}] 
\)
holds since \( \rS \rI \leq \rI \leq 1\). Now, Grönwall's inequality tells us that
\(
	\rI(t) \leq \rIo +  [{\nn}{\beta}] t.
\)
Integrating over \([0,T]\), we also obtain the energy estimate
\(
	\| \rI \|_2^2 
	\leq
	\frac{3}{2} T \big( \rIo^2 +  [{\nn}{\beta}]^2 T^2 \big),
\)
which follows from Young's inequality \cite[Appendix B]{Evans_2010}. 
Using once more that \( \rI \leq 1\), we obtain  \( \od{\rR}{t} \leq \gamma\) and then
\(
	\|\rR\|_2^2 
	\leq
	\frac{3}{2} T \big( \rRo^2 +  \gamma^2 T^2 \big).
\)
Collecting all terms for each compartment, we obtain that
\[
	\|\rho\|_2^2 
	\leq 
	\frac{3}{2} T \big( \rSo^2 + \rIo^2 + \rRo^2 +  [{\nn}{\beta}]^2 T^2 + \gamma^2 T^2 \big);
\]
i.e., there is a polynomial constant \(C(\nn, T)\) depending only on \(\nn\) and \(T\) such that
\[
	\|\rho\|_2^2 \leq C(\nn, T) \big( \|\rho_0\|^2 + \|\alpha\|^2  \big).
\]
Notwithstanding, we can actually obtain a sharper bound from the fact that \(\rho \in \Corner\), which simply yields that \(\|\rho\|_2^2  \leq T\). However, the above inequality also allows us to state that \(\rho\) depends continuously on \(\alpha\) and the initial values \(\rho_0\).
Notice that if we were to allow \(\rho_0 = 0\), the above inequalities trivially hold since the solution of the associated initial value problem is just \(\rho_0\) itself.

Moreover, it is not difficult to see that the estimates \( \big\| \od{\rS}{t} \big\|_2^2 \leq T [{\nn}\beta]^2\) and \( \big\| \od{\rR}{t} \big\|_2^2 \leq T \gamma^2\) hold. 
The case for \(\od{\rI}{t}\) requires some additional care. First, observe that
\(
	\big| \od{\rI}{t} \big| \leq {\nn}{\beta}  + \gamma + m
\).
As a result, and by applying Cauchy's inequality, we obtain that 
\(
	\big\| \od{\rI}{t} \big\|_2^2 \leq 3 T
		\big( 
		[{\nn}{\beta}]^2  + \gamma^2 + m^2 
		\big).
\)
Adding up terms and using the fact that \(\nn \geq 1\), we obtain the following estimate for the time derivative:
\begin{equation}
\label{eq:l-2_estimate}
	\Big\| \od{\rho}{t} \Big\|_2^2 
	\leq 4T {\nn}^2  \|\alpha\|^2  .
\end{equation}

\subsection{Parameter sensitivity}

Sensitivity helps us understand how much parameter values influence the disease dynamics \cite[\S 6.5]{Martcheva2015}; see also \cite{Arriola2009,Saltelli2002}. 
It can further help us determine the parameters that can be controlled to reduce or eliminate infectious diseases \cite{Hussain2021,Alexe2009,Chitnis2008}. 
Here, we will analyse a particular quantity of interest that can be used to describe the expected evolution of a disease: the basic reproduction number. In what follows, and for clarity of presentation, we will assume that \( \gamma + m > 0\).

The basic reproduction number \(\mathcal{R}_0\) is a measure of the potential for disease spread
within a population. If \(\mathcal{R}_0 < 1\), then a few infected individuals in contact with a completely susceptible population will, on average, fail to replace themselves, and the disease will not spread. If, by contrast, \( \mathcal{R}_0 > 1\), then the disease will spread and cause an outbreak \cite{Driessche2008,Siriprapaiwan2018}.

The next--generation matrix method \cite{Driessche2008,Hurford2009} is an elegant technique that determines the reproduction number using information from the equilibria of a biological system. 
In the case of system \eqref{sys:scaled-SIRD}, the equilibria are characterised by the set 
\( E = \{ \rho \in \Corner: \, \rI = 0\} \).
Note that any point in \(E\) is a disease--free equilibrium (DFE). 
The next--generation form of \eqref{sys:scaled-SIRD} is given by the split of \(\rho\) into the variables \( x \coloneqq \rI \) and \( y \coloneqq \begin{psmallmatrix} \rS &  \rR \end{psmallmatrix}^\top \). The dynamics of \(x\) are given by 
\(
	\od{x}{t} = \mathcal{F}(x,y) - \mathcal{V}(x,y)
\),
where \(\mathcal{F}\) is the rate at which the infected cohort grows due to new secondary infections and \( \mathcal{V}\) is the rate of disease progression, death, and recovery decrease. Thus \( \mathcal{F}(x,y) = \nn \beta \rS \rI \) and \( \mathcal{V}(x,y) = (\gamma+m)\rI\).
The reproduction number is defined as the spectral radius of the next--generation matrix \( FV^{-1} \), where \( F \) and \(V\) are the matrices of partial derivatives of \( \mathcal{F}\) and \(\mathcal{V}\) with respect to \(x\) evaluated at a DFE, respectively. In particular, the basic reproduction number is determined by the special case where the initial population is a 100\% susceptible population.
As \(FV^{-1}\) is just a real number for system \eqref{sys:scaled-SIRD}, we obtain
\begin{equation*}
	\mathcal{R}_0 = \frac{\nn \beta \rS}{\gamma + m} \bigg|_{\rS = 1} =  \frac{\nn \beta}{\gamma + m}.
\end{equation*}

The (local) normalised sensitivity index (also known as elasticity) for a quantity \(Q\) with respect to a parameter \(h\) is defined by 
\(
	\Phi(Q|h) \coloneqq h \pd{ }{h} \log Q = \frac{h}{Q} \pd{Q}{h}
\)
(see \cite[\S 6.5]{Martcheva2015}). As \( \mathcal{R}_0\) determines the expected number of secondary infections produced by an index case in a completely susceptible population, we can analyse its sensitivity with respect to each parameter in \(\alpha\):
\begin{align}
	\Phi(\mathcal{R}_0|\alpha)
	&= 
	(\nabla \mathcal{R}_0) \review{\,\odot\,} \frac{ 1 }{ \mathcal{R}_0 } 
	\alpha
	=
	\begin{pmatrix}
		1 & -\frac{ \gamma }{\gamma + m} & -\frac{ m }{\gamma + m}
	\end{pmatrix}^\top \hspace{-0.3em} ,
 \label{eq:SIRD_Sensitivity}
\end{align}
\review{where \(\odot\) is the Hadamard product (i.e., component--wise product between vectors).}
\review{The values of \eqref{eq:SIRD_Sensitivity}}
tell us that \(\mathcal{R}_0\) is very sensitive to \(\beta\). 
Specifically, an increment of \(\beta\) results in the same increment in \(\mathcal{R}_0\). The values of \( \Phi(\mathcal{R}_0|\gamma) \) and \( \Phi(\mathcal{R}_0|m) \) are always negative, thus implying that any increase in these parameters results in a corresponding reduction of \(\mathcal{R}_0\). Clearly, having a high mortality rate is not desirable, thus optimising a recovery rate seems a natural choice. 

In the context of parameter identification, we can use any estimates on \( \Phi(\mathcal{R}_0|\alpha)\) to determine the sets \( \AcalV\) and \( \AcalF\): for instance, we could identify only \(\beta\) and \(\gamma\) for a disease with \emph{observed} low mortality rates or whenever mortality data is deemed precise. Our methodology, based on the optimise--then--discretise approach of optimal control, provides a general technique to tackle any selection. In what follows, we will analyse the parameter identification problem under a general scope for \(\Acal\) and will exemplify the procedure for the fixed parameter \( \AcalF = \{m\} \subset [0,1]\).

\subsection{Existence of an optimal control}\label{sec:Existence}

Let us restate our optimal control problem of interest:
\begin{subequations}
\label{Optimal_Control_Problem}
\begin{align}
	&\min_{\alphaV \in \AcalV} J(\rho,\alphaV) \coloneqq \int\limits_0^T r\big( \rho(t), \alphaV \big) \dif t,
	\\[-1em]
\intertext{\qquad\qquad such that} 
	&\dod{\rho}{t} = f\big(\rho, (\alphaV,\alphaF), t\big)	\qquad \text{for }\,\, t\in (0,T),
	\qquad\text{and}\qquad
	\rho(0) = \rho_0.
	\label{eq:OCP_ode-sys}
\end{align}
\end{subequations}
For every \( \rho \in \Corner\) and \(\alphaV\in \AcalV\), the running cost \(r\) is assumed to be convex, lower semicontinuous, and bounded from below. In particular, this is the case for the identification \(\ell^2\) choice \( r(\rho,\alphaV) = \frac{1}{2} \| \rho - \widehat{\rho} \|^2\).

\begin{theorem}\label{Th:Admissibility}
	The optimal control problem \eqref{Optimal_Control_Problem} admits a solution \(\alphaV^* \in \AcalV\).
\end{theorem}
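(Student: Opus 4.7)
The plan is to apply the direct method of the calculus of variations, exploiting the a priori bounds already established for the state together with the compactness of the admissible parameter set and the lower semicontinuity of the running cost.

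First I would verify that the feasible set is non-empty: for any $\alphaV \in \AcalV$, \Cref{th:state_existence} together with the invariance result of \Cref{Th:Invariant-Regions} guarantees a unique global state $\rho \in \CC^\infty([0,T];\Corner)$ solving \eqref{eq:OCP_ode-sys}, so the cost $J$ is well defined. Since $r$ is bounded from below by hypothesis, $j^* \coloneqq \inf_{\alphaV \in \AcalV} J(\rho,\alphaV) > -\infty$, and I can select a minimising sequence $\{\alphaV^n\}_{n\in\N} \subset \AcalV$ with $J(\rho^n, \alphaV^n) \to j^*$, where $\rho^n$ denotes the state associated with $\alphaV^n$.

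Next I would extract a convergent subsequence and a candidate limit. In the time--independent case of interest (see the \Cref{sec:Existence} remark) the set $\AcalV \subset [0,1]^{\sV}$ is compact, so up to a subsequence $\alphaV^n \to \alphaV^* \in \AcalV$. The state bounds $\rho^n(t) \in \Corner$ give $\|\rho^n\|_\infty \leq 1$, and the derivative estimate \eqref{eq:l-2_estimate} gives a uniform bound on $\|\od{\rho^n}{t}\|_2$. Hence $\{\rho^n\}$ is uniformly bounded and equicontinuous, and the Arzelà--Ascoli theorem yields a (further) subsequence converging uniformly on $[0,T]$ to some $\rho^* \in \CC([0,T];\Corner)$. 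Since $f$ is polynomial in $(\rho,\alpha)$, I can pass to the limit in the integral form $\rho^n(t) = \rho_0 + \int_0^t f(\rho^n, (\alphaV^n,\alphaF), s)\, \dif s$ and conclude that $\rho^*$ solves \eqref{eq:OCP_ode-sys} for the parameter $\alphaV^*$; by uniqueness $\rho^*$ is the state associated with $\alphaV^*$.

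Finally, I would conclude using lower semicontinuity: since $\rho^n \to \rho^*$ uniformly (hence in $L^2$) and $\alphaV^n \to \alphaV^*$, Fatou's lemma applied together with the pointwise lower semicontinuity of $r$ (extended from convexity and lower semicontinuity in its arguments) yields
\begin{equation*}
J(\rho^*, \alphaV^*) = \int_0^T r(\rho^*, \alphaV^*)\, \dif t \leq \liminf_{n\to\infty} \int_0^T r(\rho^n, \alphaV^n)\, \dif t = j^*,
\end{equation*}
so that $\alphaV^*$ attains the infimum. The main obstacle is the passage to the limit in the nonlinear ODE together with the lower semicontinuity step: this requires that the compactness in the parameter topology be strong enough to transfer to strong convergence of the states against a nonlinear (polynomial) right--hand side. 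This is precisely why the derivative estimate \eqref{eq:l-2_estimate} is essential — it upgrades the pointwise parameter convergence to the uniform state convergence needed to commute the limit with $f$. For the infinite--dimensional variant with $\AcalV \subset [L^2(0,T)]^{\sV}$ one would replace pointwise compactness by weak compactness in $L^2$ and invoke an Aubin--Lions type argument, which is why the convexity assumption on $r$ is listed explicitly in the hypotheses.
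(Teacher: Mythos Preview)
Your proposal is correct and follows essentially the same route as the paper: direct method with a minimising sequence, compactness of $\AcalV$ via Bolzano--Weierstrass, Arzelà--Ascoli for the states using the uniform bounds and the derivative estimate, passage to the limit in the integral form of the ODE, and lower semicontinuity of the integral cost. The only cosmetic differences are that the paper invokes a pointwise bound on $\od{\rho_n}{t}$ (rather than the $L^2$ bound) for equicontinuity, cites Lebesgue's dominated convergence for the ODE limit, and appeals to an integral semicontinuity theorem from \cite{Clarke_2013} in place of your Fatou argument.
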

\begin{proof}
	Since the functional \(J\) is bounded from below, there exists a minimising sequence \( (\alpha_{\text{v},n})_{n\in \N}\) in \(\AcalV\) such that \( J\big( \rho(\alpha_{\text{v},n}), \alpha_{\text{v},n} \big) \to \inf_{\alphaV \in \AcalV} J \). Moreover, this sequence is bounded by definition of \(\AcalV\); hence, by the Bolzano--Weierstrass theorem \cite[Theorem 3.4.8]{Bartle2011}, there exists a subsequence, still denoted by \( (\alpha_{\text{v},n})_{n\in \N}\), that converges strongly to a pair of parameters \(\alphaV^*\).
	
	Now, let \( \rho_n\) be the unique solution to the state system \eqref{eq:ode-sys}
	associated with \(\alpha_{\text{v},n}\), for each \(n\in \N\). From the qualitative results on \(\rho_n\) (see \Cref{Th:ODE_Qualitative}), we have that \( \rho_n \leq {\nn}\) pointwise; i.e., \( (\rho_n)_{n\in \N}\)  is uniformly bounded. 
	In a similar fashion to how we arrived at \eqref{eq:l-2_estimate}, we can obtain a uniform pointwise bound over \( (\rho_n')_{n\in \N}\) that only depends on \(\nn\) and \(m\). More generally, this can be carried out as well for any \(s\)--th derivative of \( (\rho_n)_{n\in \N}\). Calling upon Arzelà--Ascoli’s theorem \cite[Ch. 2, Lemma 7.2]{Amann_1990} (and nesting subsequences), there is a continuous function \(\rho^*\) which is the uniform limit of \( (\rho_n)_{n\in \N}\). It follows from the identity
	\(
		\rho_n(t) = \rho_0 + \int\limits_0^t f(\rho_n, (\alpha_{\text{v},n}, \alphaF), t) \dif t,
	\)
	and Lebesgue's dominated convergence theorem,
	that \( \rho^*\) satisfies the ODE system with parameters \((\alphaV^*,\alphaF)\), and that \( \rho_n' \to \rho^*\) as well. By the regularity of $f$ and the information obtained from the qualitative \Cref{th:state_existence,Th:ODE_Qualitative}, \(\rho^*\) is also of class \( \mathcal{C}^\infty\), and its range is contained in \(\Corner\).
	
	Finally, observe that by the integral semicontinuity of \(r\) \cite[Theorem 6.38]{Clarke_2013}, we obtain that
	\(
		J(\rho^*,\alphaV^*) \leq \liminf\limits_{n\to \infty} J(\rho_n,\alpha_{\text{v},n}) = \inf_{\alphaV \in \AcalV} J;
	\)
	i.e., \( (\rho^*,\alphaV^*)\) is a minimiser for the optimal control problem \eqref{Optimal_Control_Problem}.
\end{proof}


\section{Optimality conditions}\label{sec:opt}

The purpose of this section is to characterise local minimisers of the optimal control problem \eqref{Optimal_Control_Problem}. Moreover, from here onwards we will work with the unscaled system \eqref{sys:SIRD}.
First, let us comment about the differentiability of the solution map; i.e., the map \( S: \Acal \to \mathcal{C}^1(0,T)\) such that \( S(\alpha) = \rho \) solves the ODE constraints \eqref{eq:OCP_ode-sys}. 
This can be studied using the extended initial value problem
\begin{equation}
\label{sys:perturbation_parameters}
	\od{\rS}{t} = -{\omega_1} \rS \rI,
	\qquad
	\od{ \rI }{t} = {\omega_1} \rS \rI - \omega_2 \rI - \omega_3 \rI,
	\qquad
	\od{ \rR }{t} = \omega_2 \rI,
	\qquad
	\od{ \omega }{t} = 0,
\end{equation}
subject to the initial conditions \(\rho(0) = \rho_0\) and \( \omega(0) = \alpha\).
Observe that, by definition, solutions of the extended system will also be in \( \Corner \times \Acal\), thus \Cref{Th:ODE_Qualitative} holds for any initial conditions \( (\rho_0, \alpha) \in \Corner \times \Acal\). Taking this fact into consideration, as well as that each differential equation constraint of the extended system is an autonomous polynomial of the variables involved in the system, we can guarantee solutions for all times. The Perturbation Theorem \cite[Ch. 6 \S13]{Birkhoff1_1991} allows us to then conclude that the solution map \( S \) is not only differentiable with respect to \(\alpha\) but of any class \( \mathcal{C}^p\) with \(p\geq 1\). Moreover, the partial derivative \( \pd{S}{\alphaV}\)  satisfies the \emph{variational system}
\[
	\od{ }{t} \Big( \pd{S}{\alphaV} \Big) = \pd{f}{\rho} \pd{S}{\alphaV} + \pd{f}{\alphaV},
	\qquad \text{for } t\in (0,T),
	\qquad\text{and}\qquad
	\pd{S}{\alphaV}(0) = 0,
\]
which comes from formally differentiating the differential system and the initial conditions with respect to \(\alphaV\).

Having discussed the regularity of the solution map, we can now focus on establishing optimality conditions for the optimal control problem \eqref{Optimal_Control_Problem} using first--order information.
By the formal Lagrangian method, we now arrive at the following Pontryagin Maximum Principle:

\begin{theorem}\label{th:existence_adjoint}
Let us suppose that \(r\) admits a continuous derivative \( \pd{r}{\rho}\).
Let the process \( (\rho,\alpha_\mathrm{v} )\) be a local minimiser for the problem \eqref{Optimal_Control_Problem}.
Then, 
there exists an adjoint vector field \(q = \begin{psmallmatrix} \qS & \qI & \qR \end{psmallmatrix}^\top \hspace{-0.35em} \)
satisfying the following \textbf{adjoint system} for all \(t \in(0,T)\):
\begin{subequations}
\label{sys:adjoint}
\begin{align}
	\dod{\qS}{t} &= \beta \rI (\qS  - \qI ) - \dpd{r}{\rS}, 
	\\
	\dod{\qI}{t}  &= \beta \rS (\qS  - \qI) + \gamma (\qI  - \qR ) + m\qI - \dpd{r}{\rI}, 
	\\
	\dod{\qR}{t} &= -\dpd{r}{\rR},
\end{align}
\end{subequations}
with the transversality condition \(q(T) = 0\).
\end{theorem}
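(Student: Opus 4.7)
The plan is to derive the adjoint system via the standard Lagrangian route, exploiting the smoothness of the solution operator $S$ established in the opening paragraph of this section. Since $S$ is of class $\mathcal{C}^p$ for any $p \geq 1$ and $r$ has a continuous derivative in $\rho$, the reduced cost $\widehat J(\alphaV) := J(S(\alphaV,\alphaF),\alphaV)$ is continuously differentiable on $\AcalV$, so any local minimiser must satisfy $\widehat J'(\alphaV) h = 0$ for every admissible direction $h$. The task is to express this derivative in a form in which the claimed adjoint $q$ appears as a Lagrange multiplier, thereby producing the stated system.

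Concretely, I would introduce $q : (0,T) \to \R^3$ and form the Lagrangian
\[
\mathcal{L}(\rho,\alphaV,q) = \int_0^T r(\rho,\alphaV) \dif t - \int_0^T q^\top \big(\dot\rho - f(\rho,\alpha,t)\big) \dif t,
\]
then integrate the $q^\top \dot\rho$ term by parts to produce the boundary contribution $q(T)^\top \rho(T) - q(0)^\top \rho_0$ and an interior term in $\dot q^\top \rho$. Taking an admissible variation $\delta\rho$ with $\delta\rho(0)=0$ (since $\rho_0$ is fixed) and using that $\delta\rho$ on $(0,T)$ and the endpoint $\delta\rho(T)$ are otherwise free, one reads off the transversality condition $q(T)=0$ together with the adjoint ODE
\[
\dot q = -\left(\dpd{f}{\rho}\right)^{\!\top} \! q - \left(\dpd{r}{\rho}\right)^{\!\top}.
\]

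To arrive at the componentwise form \eqref{sys:adjoint}, I would compute the Jacobian
\[
\dpd{f}{\rho} = \begin{pmatrix} -\beta \rI & -\beta \rS & 0 \\ \beta \rI & \beta \rS - \gamma - m & 0 \\ 0 & \gamma & 0 \end{pmatrix},
\]
transpose, and substitute row by row. The first and second rows reassemble into the $(\qS - \qI)$ groupings through the skew structure of the $\beta$ entries, the $\gamma(\qI - \qR)$ term comes from the $(2,3)$ and $(3,2)$ entries, and the zero column forces $\dot \qR = -\partial r/\partial \rR$. Existence of $q$ then reduces to a linear terminal-value problem with continuous coefficients, which is globally well posed because $\rho \in \mathcal{C}^\infty([0,T])$ by \Cref{th:state_existence,Th:ODE_Qualitative}.

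The main step required to promote this formal calculation to a rigorous necessary condition is the adjoint calculus: differentiate $\widehat J$ at the minimiser using the variational system
\[
\dot \eta = \dpd{f}{\rho} \eta + \dpd{f}{\alphaV} h, \qquad \eta(0)=0,
\]
for the sensitivity $\eta = \partial S / \partial \alphaV \cdot h$ (whose existence and regularity follow from the Perturbation Theorem already invoked in the section), multiply by $q$, and integrate by parts over $[0,T]$. The two boundary terms vanish because $\eta(0)=0$ and $q(T)=0$, and the inner cross terms cancel precisely when $q$ solves \eqref{sys:adjoint}, so that $\widehat J'(\alphaV) h$ is left expressed solely through $q$ and the partial derivatives of $f$ and $r$ with respect to $\alphaV$. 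I expect this bookkeeping — tracking signs in the integration by parts and verifying that the transposed Jacobian produces the precise $\beta$-, $\gamma$-, $m$-groupings of \eqref{sys:adjoint} — to be the only real obstacle, since no new analytic machinery beyond the smoothness of $S$ and standard linear ODE theory is needed.
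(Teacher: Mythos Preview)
Your proposal is correct and follows essentially the same route as the paper: form the Lagrangian, integrate by parts in the $q^\top\dot\rho$ term, set the variation with respect to $\rho$ to zero to read off the adjoint ODE $\dot q = -(\partial f/\partial\rho)^\top q - (\partial r/\partial\rho)^\top$ and the terminal condition $q(T)=0$, then invoke linear ODE theory for existence. The paper additionally carries a separate multiplier $p$ for the initial condition $\rho(0)=\rho_0$ (which is then identified with $q(0)$), and works out the adjoint operators compartment by compartment rather than through the full Jacobian at once, but these are presentational differences, not substantive ones.
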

\begin{proof}
	The result follows from \cite[Theorem 22.2]{Clarke_2013} and noting that \(\alpha\) is constant across time. However, the same can be proven using optimisation theory in Banach spaces; see \cite[Proposition 9.2]{Manzoni2021}. In what follows, we will use the formal Lagrangian technique to derive the system \eqref{sys:adjoint}. The technique is a powerful tool that will allow us to determine a simplified expression for a derivative of the cost functional with respect to the parameter pair \(\alpha\).
	
	Let us start by defining the operators \( \DD: [H^1 (0,T)]^3 \times \Acal \to [L^2(0,T)]^3\) and \( G: [H^1 (0,T)]^3 \times \Acal \to [L^2(0,T)]^3 \times \R\), given by 
	\( 
		\DD(\rho,\alpha) \coloneqq \od{\rho}{t} - f(\rho,\alpha,t)
	\) and
	\( 
		G(\rho,\alpha) \coloneqq 
			\begin{pmatrix}
 				\DD(\rho,\alpha) & \rho(0) - \rho_0
			\end{pmatrix}^\top \hspace{-.3em}.
 \)
	In particular, any feasible solution of \eqref{Optimal_Control_Problem} satisfies \( G(\rho,\alpha) = 0\).
	
	Now, let us define the Lagrangian associated with the optimal control problem \eqref{Optimal_Control_Problem} as
\begin{align}
	\notag
	\mathcal{L}(\rho,\alphaV,q,p) &\coloneqq
	J(\rho,\alphaV)
	- \left\langle \begin{psmallmatrix} q \\ p \end{psmallmatrix}, G(\rho,\alpha) \right\rangle 
	\\		\label{eq:Lagrangian}
	&=
	\int\limits_{0}^T r(\rho,\alpha) - 
	\big[  \DDS(\rho,\alpha) \qS 	+ \DDI(\rho,\alpha) \qI	+ \DDR(\rho,\alpha) \qR \big]
	\dif t - \langle \rho(0) - \rho_0,  p\rangle.
\end{align}
Here \(q\) and \(p\) are Lagrange multipliers, which will be properly defined later on, and \( \DD_{(\jmath)}\) is the component of \(\DD\) associated with compartment \( {(\jmath)} \), and we understand that \( \alpha = (\alphaV,\alphaF) \).
From optimisation theory (see \cite[Theorem 9.2]{Manzoni2021}), we expect \( \pd{ }{\rho} \mathcal{L}\) to be equal to zero at any local minima.
Let us start by finding the directional derivatives of \eqref{eq:Lagrangian} with respect to \(\rho\), as these will allow us to find a characterisation of \(q\).

	For any sufficiently smooth function \(\hS\), 
integration by parts lets us obtain
\[
	\pd{ }{\rS} \bigg[\int\limits_{0}^T \DDS(\rho,\alpha) \qS  \dif t \bigg] (\hS)
	= 
	\hS \qS \Big|_0^T
	+ \int\limits_0^T -\od{\qS}{t} \hS + \beta \rI \hS \qS \dif t .
\]
Then we have that
\[
	\pd{ }{\rS} \mathcal{L} (\hS) = \int\limits_{0}^T \pd{r}{\rS} (\hS) 
	+ \od{\qS}{t} \hS - \beta \rI \hS \qS  + \beta \rI \qI \hS  \dif t
	- \hS(T) \qS(T) + \hS(0) \big[  \qS(0) - \pS \big]
\]
is equal to zero for any smooth \( \hS\) if and only if \( \pd{r}{\rS} 
	+ \od{\qS}{t}  - \beta \rI  \qS  + \beta \rI \qI = 0\), \( \qS(T) = 0\), and \( \pS = \qS(0)\).
As a result, we can define the following adjoint representation:
\[
	\pd{ }{\rS} \bigg[\int\limits_{0}^T \DD_S(\rho,\alpha) q_S  \dif t \bigg] (\hS)
	= 
	\int\limits_0^T \bigg[ -\od{\qS}{t} + \beta \rI \qS \bigg] \hS \dif t
	\eqqcolon 
	\left\langle \pd{ \DDS}{\rS}^* [\qS], \hS \right\rangle_{L^2(0,T)}.
\]
In a similar fashion, we can define the adjoint operators
\begin{align*}
	\pd{ }{\rI} \bigg[\int\limits_{0}^T \DDS(\rho,\alpha) \qS  \dif t \bigg] (\hI)
	&= 
	\int\limits_{0}^T  [ \beta \rS \qS ] \hI \dif x \dif t
	\eqqcolon 
	\left\langle \pd{ \DDS}{\rI}^* [\qS], \hI \right\rangle_{L^2(0,T)},
	\\
	\pd{ }{\rR} \bigg[\int\limits_{0}^T \DDS(\rho,\alpha) \qS  \dif t \bigg] (\hR) &= 0
	\eqqcolon 
	\left\langle \pd{ \DDS}{\rR}^* [\qS], \hR \right\rangle_{L^2(0,T)}
	,
\end{align*}
which hold for any \( \hI\) and \( \hR\) sufficiently smooth whenever \( \qI(T) = \qR(T) = 0\), \( \pI = \qI(0)\), and \( \pR = \qR(0)\).

At this point, let us define \( (\pd{ }{\rho} \DDS )^* (\rho,\alpha)\) as the vectorial field
\[
	 \Big( \pd{ }{\rho} \DDS \Big)^*(\rho,\alpha) [\qS] \coloneqq
	\begin{pmatrix}
		-\od{\qS}{t} + \beta \rI \qS
		&
		\beta \rS \qS
		& 
		0
	\end{pmatrix}^\top \hspace{-.3em}.
\]
Applying a similar argument as above with the derivatives of \(\DDI\) and \(\DDR\), we can define \( (\pd{ }{\rho}  \DDI)^* (\rho,\alpha)\) and \( (\pd{ }{\rho} \DDR)^*(\rho,\alpha)\) as the other two columns of the following adjoint operator matrix:
\begin{align*}
	\Big( \pd{ }{\rho} \DD \Big)^*(\rho,\alpha) [q]
	&\coloneqq
	\begin{pmatrix}
		-\od{\qS}{t} + \beta \rI \qS & -\beta \rI \qI   & 0 
		\\
		\beta \rS \qS & -\od{\qI}{t} - \beta \rS \qI + (\gamma+m) \qI  & -\gamma \qR 
		\\
		0 & 0 & -\od{\qR}{t}
	\end{pmatrix} 
	\\
	&= -\diag \big( \od{q}{t} \big)  - \big(\pd{f}{\rho}\big)^\top  \diag(q),
\end{align*}
where \( \pd{f}{\rho}\) is the Jacobian of \(f\) with respect to \(\rho\).

Since \( \big( \pd{ }{\rho} \mathcal{L} \big)^*(\rho,\alpha) [q] h = 0\) for any smooth \(h\) in \((0,T)\), and hence this holds as well in \(H^1(0,T)\), by the Fundamental Lemma of the Calculus of Variations, we have that \(q\) must satisfy the pointwise system
	\begin{equation}
	\label{eq:Functional_adjoint_1}
		0 = \pd{r}{\rho} - \Big( \pd{ }{\rho} \DD \Big)^*(\rho,\alpha) [q]  \, \mathbf{1}_3 ,
		\qquad\text{with}\qquad q(T) = 0,
	\end{equation}
	where \(\mathbf{1}_3\) is a constant three--dimensional vector of ones. Observe that the last expression is nothing else but
	\(
		\od{q}{t} =  -\big(\pd{f}{\rho}\big)^\top q - \pd{r}{\rho}
	\)
	with the terminal condition \(q(T) = 0\); i.e., \(q\) satisfies the adjoint system \eqref{sys:adjoint}. Observe that this is a non--homogeneous and linear variable--coefficient system of ordinary differential equations in \(q\). By continuity of \(\pd{r}{\rho}\) and the compactness of \(\Corner\) and \(\Acal\),
	there exists a unique \(\mathcal{C}^1\) solution for problem \eqref{sys:adjoint}
	\cite[Ch. 6 \S8]{Birkhoff1_1991}.
\end{proof}

Observe that as \(p\) is characterised completely by the values of \(q\) at any stationary point, we can restrict ourselves to the case \( p = q(0)\). Moreover, \eqref{eq:Functional_adjoint_1} can actually be expressed as \( \pd{G}{\rho}^* q = -\pd{J}{\rho}\) where the formal adjoint of the first derivative of \(G\) with respect to \(\rho\) also includes the condition \(q(T) = 0\). It turns out that if we define the reduced cost functional 
\(
	j(\alphaV) \coloneqq J( S(\alphaV),\alphaV),
\)
where \(S\) is the solution operator of the state system restricted to varying \(\alphaV\),
and consider the differentiability of \(J\) and \(S\), we have that \(j\) is a differentiable functional, and its derivative satisfies 
\begin{align}
	\notag
	\dpd{j}{\alphaV} (\alphaV) 
	&= \dpd{J}{\rho} \big( \mathcal{S}(\alphaV), \alphaV \big) \circ \dpd{\mathcal{S}}{\alpha}(\alphaV) + \dpd{J}{\alphaV} \big( \mathcal{S}(\alphaV), \alphaV \big)
	\\
	&= -\dpd{G}{\alphaV}\big( \mathcal{S}(\alphaV), \alpha \big)^* q
	+ \dpd{J}{\alphaV} \big( \mathcal{S}(\alphaV), \alphaV \big);
	\label{eq:reduced_derivative-th}
\end{align}
see \cite[Proposition 9.2]{Manzoni2021}. It is understood that in \eqref{eq:reduced_derivative-th}, \( q(0) \) is a value of \(q\).

Notice that there are no differential relationships for \(\alpha\) explicitly in \(G\), neither does the initial condition \(\rho_0\) depend on \(\alpha\). 
Hence, we obtain the following nonlocal operator:
\[
	\pd{G}{\alphaV } \big( {S}(\alphaV), \alpha \big)^* q = \int\limits_0^T  - \del{\dpd{f}{\alphaV} }^\top \hspace{-0.3em} q \dif t ,
\]
Using Leibniz's integral rule, we also obtain that 
\(
	\pd{J}{\alphaV} \big( S(\alphaV), \alphaV \big)
	=
	\int\limits_0^T \pd{r}{\alphaV} \big( S(\alphaV), \alphaV \big) \dif t,
\)
hence we obtain:
\begin{equation}
\label{eq:reduced_derivative}
	\pd{j}{\alphaV} (\alphaV) 
	= 
	\int\limits_0^T
	\del{\dpd{f}{\alphaV} }^\top \hspace{-0.3em} q 
	+ \pd{r}{\alphaV} \big( S(\alphaV), \alphaV \big)
	\dif t.
\end{equation}

Notice that we can also obtain \eqref{eq:reduced_derivative-th} and, consequently, \eqref{eq:reduced_derivative} by differentiating the formal Lagrangian \eqref{eq:Lagrangian} with respect to \(\alphaV\) and using Clairaut's theorem \cite{Christol1997}. 
Finally, observe that, since \(\alphaV\) is a constrained variable inside the convex set \(\AcalV\), stationarity gives a first--order necessary optimality condition for the gradient of the reduced objective \eqref{eq:reduced_derivative} at a local minimiser \(\alphaV\), which is given by the variational inequality
\(
	\Big\langle \pd{j}{\alphaV} (\alphaV), \omega - \alphaV \Big\rangle \geq 0
\)
for all \( \omega \in \AcalV\) \cite[Proposition 10.36]{Clarke_2013}. 
Using Hilbert's Projection Theorem \cite[Theorem 5.2]{Brezis_2011}, this is equivalent to the projection relationship
\begin{equation*}
	\alphaV = \proj_{\AcalV} \Big( \alphaV - \kappa \pd{j}{\alphaV} (\alphaV) \Big)
\end{equation*}
for all \(\kappa > 0\) \cite[Lemma 9.1]{Manzoni2021}, where \( \proj_{\AcalV} : \R^2 \to \AcalV \) is the coordinate--wise projection operator onto \( \AcalV\), that is, 
\begin{equation}
\label{eq:projection}
	(\proj_{\AcalV} \omega)_i = \max \big\{ 0, \min \{ \omega_i, 1 \}  \big\}.
\end{equation}
Consolidating all of our previous discussion, we have arrived at the following result:

\begin{theorem}
\label{th:Optimality-System}
	Let \( (\rho, \alphaV) \in [H^1(0,T)]^3 \times \AcalV \) be an optimal solution to problem \eqref{Optimal_Control_Problem}. There exists an adjoint state \(q \in [H^1(0,T)]^3\) such that the following optimality systems hold:

\noindent\textbf{State system} for all \(t \in (0,T)\) and initial condition \( \rho(0) = \rho_0\):
\begin{align*}
	\dod{\rS}{t} &= -\beta \rS \rI,
	\qquad
	\dod{\rI}{t} = \beta \rS \rI - \gamma \rI - m \rI,
    \qquad
	\dod{\rR}{t} = \gamma \rI.
\end{align*}
\noindent\textbf{Adjoint system}  for all \(t \in (0,T)\) and terminal condition \( q(T) = 0\):
\begin{align*}
    \begin{aligned}
	\dod{\qS}{t} &= \beta \rI (\qS  - \qI ) - \dpd{r}{\rS},
    \\
	\dod{\qI}{t} &= \beta \rS (\qS  - \qI) + \gamma (\qI  - \qR ) + m\qI - \dpd{r}{\rI},
    \qquad
    \dod{\qR}{t} = -\dpd{r}{\rR}.
    \end{aligned}
\end{align*}
\noindent\textbf{Gradient system} for all positive real values \(\kappa > 0\):
\vspace{-0.75em}
\begin{align*}
	\alphaV &= \proj_{\AcalV} \cbr{ \alphaV - \kappa
    \int\limits_0^T
	\del{\dpd{f}{\alphaV} }^\top \hspace{-0.3em} q 
	+ \dpd{r}{\alphaV} \big( S(\alphaV), \alphaV \big)
	\dif t
	}
    .
\end{align*}%
%
\end{theorem}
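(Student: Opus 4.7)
The theorem is a consolidation of the preceding results, so the plan is to verify each of the three systems in turn, invoking the machinery already built in the section rather than redoing it. The state system is simply a restatement of the feasibility constraint \eqref{eq:OCP_ode-sys}, which any admissible pair \((\rho,\alphaV)\) satisfies by definition; no additional argument is required beyond noting that an optimal solution is in particular feasible.

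For the adjoint system, I would directly invoke \Cref{th:existence_adjoint}: since \((\rho,\alphaV)\) is a local minimiser of \eqref{Optimal_Control_Problem} and the running cost \(r\) is assumed differentiable in \(\rho\), that theorem produces an adjoint field \(q\in [H^1(0,T)]^3\) (in fact \(\mathcal{C}^1\), by the argument at the end of its proof, which uses continuity of \(\pd{r}{\rho}\) and compactness of \(\Corner\) and \(\Acal\)) satisfying the prescribed ODE with the transversality condition \(q(T)=0\). The three equations displayed under \textbf{Adjoint system} are precisely system \eqref{sys:adjoint}, so there is nothing new to prove at this step.

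The gradient system is the first-order necessary condition on \(\alphaV\). I would proceed by introducing the reduced cost \(j(\alphaV) \coloneqq J(S(\alphaV),\alphaV)\). Differentiability of \(S\) was established through the extended initial value problem \eqref{sys:perturbation_parameters} and the Perturbation Theorem, and combined with the differentiability of \(J\) this yields that \(j\) is differentiable with gradient given by \eqref{eq:reduced_derivative}, namely
\[
\dpd{j}{\alphaV}(\alphaV) = \int\limits_0^T \del{\dpd{f}{\alphaV}}^{\!\top} q + \dpd{r}{\alphaV}\big(S(\alphaV),\alphaV\big)\dif t,
\]
where \(q\) is the adjoint produced above. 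Because \(\alphaV\) is a local minimiser over the convex set \(\AcalV\), the standard first-order necessary condition gives the variational inequality \(\langle \pd{j}{\alphaV}(\alphaV), \omega - \alphaV\rangle \geq 0\) for every \(\omega\in \AcalV\). Applying Hilbert's Projection Theorem as cited after \eqref{eq:reduced_derivative} converts this inequality into the fixed-point projection identity stated in the theorem, valid for every \(\kappa>0\), with \(\proj_{\AcalV}\) given coordinate-wise by \eqref{eq:projection}.

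The only step requiring care, and the one I expect to be the main obstacle, is making the chain rule for \(j\) precise: one must justify that \(\pd{S}{\alphaV}\) (the variational system) is admissible as a test direction and that the integration-by-parts identity hidden inside \eqref{eq:reduced_derivative-th} really produces the adjoint representation. This is handled by the discussion preceding \eqref{eq:reduced_derivative}, using the formal Lagrangian \eqref{eq:Lagrangian} together with Clairaut's theorem, so I would simply cite that derivation rather than repeat it. Once these three ingredients are assembled, the theorem follows.
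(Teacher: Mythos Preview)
Your proposal is correct and matches the paper's own treatment exactly: the paper presents \Cref{th:Optimality-System} as the consolidation of the preceding derivations (\Cref{th:existence_adjoint} for the adjoint, the reduced gradient formula \eqref{eq:reduced_derivative}, and the projection identity from the variational inequality), with no additional argument beyond collecting these pieces. Your identification of the chain rule for \(j\) as the only subtle point, handled by citing the Lagrangian derivation leading to \eqref{eq:reduced_derivative-th}--\eqref{eq:reduced_derivative}, is also precisely how the paper proceeds.
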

\vspace{-1\baselineskip}

\begin{remark}
    Observe that we have not specified which notion of derivative we are using whenever we differentiate with respect to \(\alpha\). This is a result from the equivalence of differentiability notions for Euclidean finite dimensional spaces \cite{Christol1997}.
\end{remark}

\begin{remark}
    Using regularity theory, \( \rho\) is actually of class \(\mathcal{C}^\infty\), and \(q\) possesses as many derivatives as \( \pd{r}{\rho}\).
\end{remark}

\begin{remark}
There is no closed analytical solution for the adjoint \(q\), but it can be expressed using the time ordered exponential \cite{Rugh-1996}  and the Peano--Baker series; see \cite[Theorem 3.3]{Rugh-1996} and \cite[Corollary 2.1]{Hartman_2002}. Let us also define the backward--in--time adjoint \(\bar{p}\) by the change of variables \(\tau \equiv T - t\) for which \( \bar{p}(t) \coloneqq q(T - t)\). Observe that \( \dif \tau = - \dif t\), then 
\( 
	\od{\bar{p}}{t} 
	= - \od{q}{t} (T-\tau) = \big(\pd{f}{\rho} (T-t) \big)^\top \bar{p} + \pd{r}{\rho} (T-t)
\). 
Let us define \( A(t) \coloneqq \big(\pd{f}{\rho} (T-t) \big)^\top\) and \( b(t) \coloneqq \pd{r}{\rho} (T-t)\), and note that \(\bar{p}(0) = 0\). This way we can write the adjoint system in the form \( \od{\bar{p}}{t} = A(t) \bar{p}(t) + b(t)\).
From \cite[Lemma 4.1]{Hartman_2002}, we obtain the estimate
\[
	\| \bar{p}(t) \|_{\ell^r} \leq \exp{ \bigg[ \int\limits_0^t \| A(s) \|_{\ell^r}  \dif s \bigg]} \int\limits_0^t \| b(s) \|_{\ell^r} \dif s.
\]
If we use the \(\ell^\infty\) induced norm, we can find that \( \| A(s) \|_{\ell^\infty} = \max\{2 \beta \rI, 2\beta  \rS + 2\gamma + m  \}|_{(T-s)} \leq 2[\nn \beta] + 2\gamma + m \), and then
\[
	\|q(t)\|_{\ell^\infty} \leq e^{ (2[\nn \beta] + 2\gamma + m) (T-t) } \int\limits_{t}^T \Big\| \pd{r}{\rho} (\tau) \Big\|_{\ell^\infty} \dif \tau.
\]
\end{remark}

\subsection{Choice of parameter space and cost}

We will consider running costs based on the following three functionals (scaled if appropriate) that are relevant in the context of fitting:
\begin{subequations}
\begin{align}
	r_1(\rho,\alpha) &= r_1(\rho) = \frac{1}{2} \| \rho - \widehat{\rho} \|^2
	=
	\frac{1}{2} \del{| \rS - \hrS |^2 + |\rI - \hrI|^2 + |\rR - \hrR|^2 },
	\label{eq:First-Objective-r}
	\\
	r_2(\rho,\alphaV) &= r_1(\rho,\alphaV)  + \frac{\theta}{2} \|\alphaV\|^2
	= r_1(\rho,\alphaV)  + \frac{\theta}{2} \sum_{i=1}^{\sV} \alpha_{\text{v}, i}^2
	& (\theta > 0)
	,
	\label{eq:Second-Objective-r}
	\\
	r_3(\rho,\alphaV) &= r_2(\rho,\alphaV) + \frac{\vartheta}{2T} \| \rho(T) -  \widehat{\rho}(T) \|^2_{\R^3}
	& (\vartheta,\theta > 0)
	.
	\label{eq:Third-Objective-r}
\end{align}
\end{subequations}
The tracking cost \(r_1\) is a continuous extension of the classical least--squares approach. It is particularly useful if the parameter identification problem is well posed. 
However, when uniqueness cannot be guaranteed, the regularised functional \(r_2\) allows us to guarantee the existence of a unique solution for some \( \theta > 0\); see for instance \cite{Lenhart1999}.
Moreover,
for a disease of concern, initial estimates of the population in each compartment might be prone to error.
Certainly, as time passes, better measurements are taken, hence it makes sense to also study the fitting at a final cutoff time, thus \( r_3\) contains a terminal cost associated with a final observation of the disease. Here we assume that \( \theta\) and \(\vartheta\) are positive.
Although we have not studied a terminal cost in the previous sections, the existence theory
developed holds by defining the non--zero terminal condition \( q(T) = \vartheta \big[ \rho(T) - \widehat{\rho} (T) \big] \). The boundedness of \(q\) still holds by \cite[Lemma 4.1]{Hartman_2002}.

\begin{remark}
As discussed in \cite{Joshi2004}, the regularised functional gives an approximation of the identification problem due to the nature of the regularisation itself. However, from the analysis of the state system, it can be shown that there is a minimising sequence \(\theta_n \to 0\), such that the corresponding sequence of associated optimal controls also converges to a solution of the identification problem. 
In practice, we have observed that black--box methods applied to the first order optimality conditions are able to find this solution without the need for a regularisation term.
\end{remark}

\section{Optimisation algorithms}\label{Sec:Algorithms}

The separability and solvability of the state and adjoint systems allows to use \eqref{eq:reduced_derivative} in any gradient--based method to solve the optimal control problem. Iterative (also known as black--box or reduced space) methods rely on the natural decomposition of the state and adjoint systems: Given a feasible control, an existing algorithm for the solution of the state system is embedded into an optimisation loop. To improve numerical performance, a gradient is incorporated by augmenting a routine for solving the adjoint system. Then we can apply any descent technique to update the control and repeat the process \cite{Manzoni2021}. The main drawback of these methods is that the solver for the state equation results in a costly step \cite{Herzog2010}. Another family of methods are all--at--once methods which treat the control and state variables as independent optimisation variables. We refer the reader to \cite{Herzog2010} for a general overview of these methods. 

Due to the nature of the nonlinear optimal control problem \eqref{Optimal_Control_Problem}, and the nonlocality of the reduced derivative \eqref{eq:reduced_derivative}, we find that reduced space methods are appealing. As a result, we decouple the updates of \(\alphaV\), \(\rho\), and \(q\) to obtain a descent algorithm which considers the box constraints. The nature of the bounds for \(\Acal\) implies the need to project onto the set of admissible controls, which is achieved by the projection operator \eqref{eq:projection}. In this section, we will use four reduced space algorithms to solve \eqref{Optimal_Control_Problem}. Two come from the convex optimisation family, namely Projected Gradient Descent (PGD) and the Fast Iterative Shrinkage--Thresholding Algorithm, the third is the nonmonotone Accelerated Proximal Gradient method, a non--convex extension of FISTA, while the fourth is a limited memory BFGS trust region scheme adapted to non--convex optimisation. We now introduce the modified algorithms in the context of optimal control.

As we will observe, the choice of PGD and FISTA reflects their simplicity in the sense that only efficient objective evaluations are required to iterate the algorithms. As a result, both algorithms can run for a large number of iterations in a matter of seconds before reaching a stopping criteria. The difference between the algorithms is that FISTA, by construction, provides a surrogate model of the reduced objective that can be used to find feasible solutions with low cost. This advantage is also included in nmAPG, which uses a correction step to compensate for the lack of convexity.
On the other hand, LM--BFGS is the most advanced technique that incorporates additional information from the non--convexity of the reduced objective at the expense of more expensive iterations with additional optimisation steps involved. However, our results show that this algorithm can often find near--optimal solutions efficiently and with a small iteration count. As a result, our experiments will confirm that the choice of LM--BFGS can be used to quickly solve the problem and obtain high--quality solutions.

\subsection{First--order methods}

Possibly one of the simplest routines in convex optimisation is the Projected Gradient Descent algorithm. For a given differentiable function \(j\) whose argument \(\alpha\) is constrained to a closed and convex region \(\Acal\), PGD aims to minimise \(j(\alpha)\) by taking repeated steps in the opposite direction of the gradient \(\nabla j\), also known as the steepest descent direction. For more details, see \cite{Tr_ltzsch_2010,De_los_Reyes_2015}.

PGD is part of the family of first--order proximal--based methods and yields linear convergence \cite{Combettes2011}. Convergence in the objective can be improved by additional higher--order information \cite{Beck2017} or by modifying the iterates with the addition of momentum and inertial steps \cite{Beck2009}. In particular, the Fast Iterative Shrinkage--Thresholding Algorithm exhibits quadratic convergence for the objective function and linear convergence in the iterates \cite{Chambolle2015}. FISTA considers the following assumptions:
\begin{itemize}
	\item The optimisation problem can be posed as a global problem \( \min \{F(\alpha) = j(\alpha) + \imath(\alpha): \, \alpha \in \R^d \}\) (in our context \(d\leq 3\)),
	\item the optimisation problem is solvable; i.e., attains a finite minimum with a finite minimiser,
	\item \(\imath\) is a continuous convex function which is possibly nonsmooth, and
	\item \(j\) is a smooth convex function of the type \(\mathcal{C}^{1,1}\); i.e., it is continuously differentiable with Lipschitz gradient.
\end{itemize}
This last assumption can be relaxed for local or unknown Lipschitz constants. This incurs an additional backtracking step. 

The general reasoning of the algorithm comes from noticing that the quadratic approximation (i.e., a surrogate function) of \(F(\omega)\):
\begin{equation}
\label{eq:surrogate}
	Q_L (\alphaV,\omega) \coloneqq j(\omega) + \big\langle \alphaV-\omega, \nabla j(\omega) \big\rangle + \frac{L}{2} \|\alphaV-\omega\|^2 + \imath(\alphaV)
\end{equation}
has a unique minimiser in \(\alphaV\) given by the proximity operator of \(L^{-1}\imath\), evaluated at nothing else than a gradient step:
\[
	P_L (\omega) \coloneqq 
	\argmin_{\alphaV} \left\{ L^{-1} \imath(\alphaV) + \frac{1}{2} \left\| \alphaV - \big( \omega - L^{-1} \nabla j(\omega) \big) \right\|^2 \right\}
	=
	\prox_{L^{-1} \imath} \big( \omega - L^{-1} \nabla j(\omega)  \big).
\]
For our context in optimal control, we identify the convex indicator function over the set \(\Acal\) as \( \imath\). Standard optimisation theory yields that the proximity operator of an indicator function is simply the projection operator of the indicated set; i.e., \( \prox_{\imath} = \proj_{\AcalV}\) as in \eqref{eq:projection}. Hence, we obtain the simplified operator
\[
	P_L (\omega) = \proj_{\AcalV} \big( w - L^{-1} \nabla j(\omega)  \big).
\]
The resulting numerical scheme, adapted from \cite{Beck2009} and \cite{Chambolle2015}, is presented in \Cref{alg:FISTA-BT}.

\begin{algorithm}[h]
\begin{small}
\begin{algorithmic}[1]
\STATE \textbf{Input:} Initial guess \((\alpha_{\mathrm{v},0}, \alphaF) \in \Acal\), \(L_0 > 0\), \(\eta > 1\), \( \omega_0 = \alpha_{\mathrm{v},0}\), \(\theta_0 = 1\), \(\nu > 2\).
\STATE Set \(k := 0\).
\WHILE{ stopping criteria has not been met }
	\STATE Solve the state system \eqref{eq:OCP_ode-sys} to obtain \(\rho_k \coloneqq \rho(\omega_k)\).
	\STATE Solve the adjoint system \eqref{sys:adjoint} to obtain \(q_k\).
	\STATE Evaluate the reduced gradient \eqref{eq:reduced_derivative} \(\nabla j(\omega_k)\).
	\STATE Find the smallest \(i_k \in \N\) such that \\  \label{alg:FISTA-BT-Surrogate}
	\(
		\begin{cases}
			L = \eta^{i_k} L_{k},
			\\
			j\big( P_L(\omega_k) \big) \leq Q_L \big( P_L(\omega_k), \omega_k \big).
		\end{cases}
	\)
	\STATE Update \(L_{k+1} = \eta^{i_k} L_{k}.\)
	
	\STATE Update \( \alpha_{\mathrm{v},k+1} = P_{L_{k+1}}(\omega_k). \)
	\STATE Update \( \theta_{k+1} = 1 + \nicefrac{k}{\nu}.\)
	\STATE Update \(\omega_{k+1} = \alpha_{\mathrm{v},k+1} + \frac{\theta_k - 1}{\theta_{k+1}} ( \alpha_{\mathrm{v},k+1} -  \alpha_{\mathrm{v},k} ). \)
	\STATE Increase \(k \to k+1\).
\ENDWHILE
\end{algorithmic}
\vspace{0.1em}
\caption{FISTA with backtracking}
\label{alg:FISTA-BT}
\end{small}
\end{algorithm}

An advantage of \Cref{alg:FISTA-BT} with respect to other gradient--based methods is that the backtracking step only requires one evaluation of the gradient. However, observe that the reduced objective function 
\(
	j(\alphaV) \coloneqq J( S(\alphaV),\alphaV)
\)
is not necessarily convex. This can be inferred from the fact that the solution map \(S\) is non--convex as \(\alphaV\) and \(\rho = S(\alphaV)\) interact nonlinearly in the state system \eqref{eq:OCP_ode-sys}. As a result, even though the function \(J(\rho,\alphaV)\) can be convex in both its arguments, its reduced form \(j(\alphaV)\) can have a non--convex structure.

\begin{figure}[htbp]
\centering
	\includegraphics[scale=0.42]{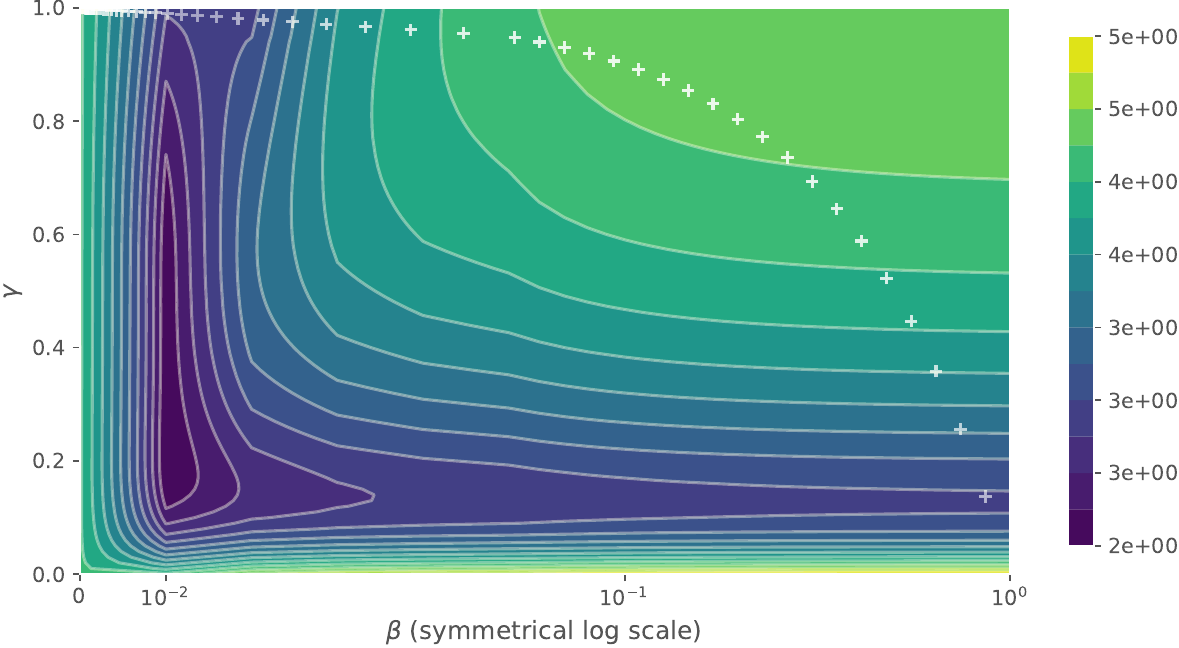}
    \includegraphics[scale=0.42]{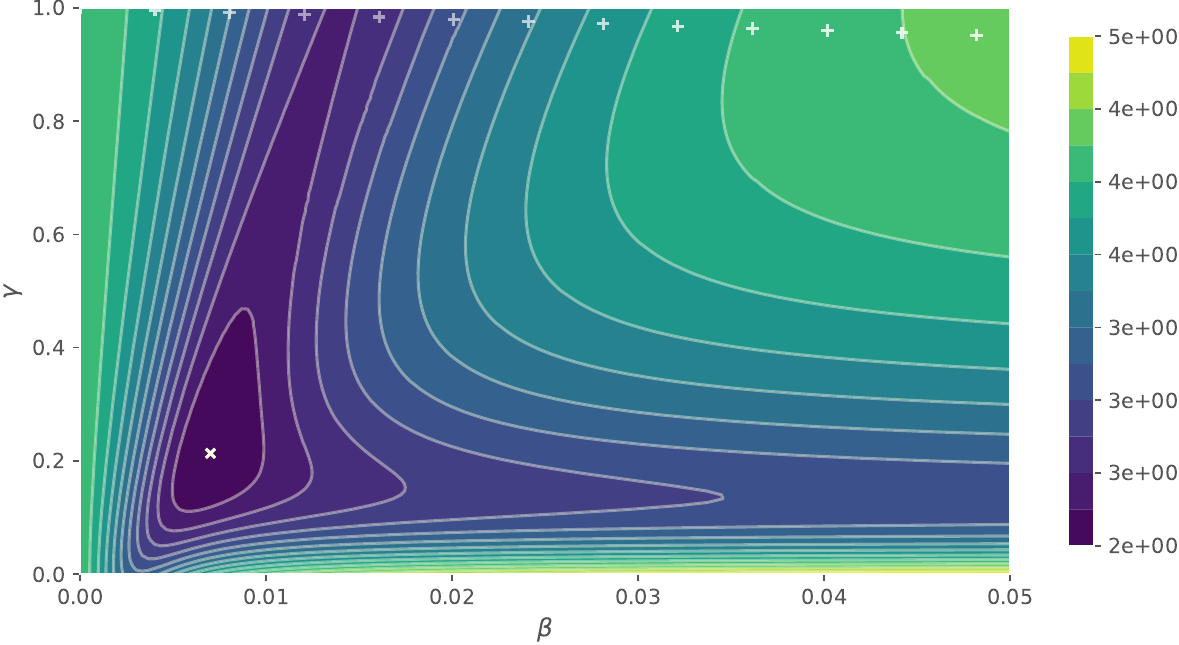}
	\vspace{1em}
\caption{Contour plots for the reduced objective \(j\) with zero target \(\widehat{\rho} = 0\). The panel on the left is the result of evaluating the scaled reduced objective function \(( \nicefrac{j(\alphaV)}{\nn^2} )\) over the set of feasible controls \(\AcalV = [0,1]^2\). The horizontal axis is plotted in symmetrical logarithmic scale for better visualisation of the minimal curves. The panel on the right zooms into the subregion \([ 0,\nicefrac{1}{20}] \times [0,1]\). The white \texttt{x} mark at \(\alpha \approx \begin{psmallmatrix} \mathtt{0.007} & \mathtt{0.213} \end{psmallmatrix}^\top \hspace{-0.3em} \) corresponds to the smallest objective found in this grid search. The \texttt{+} marks in both plots depict the non--convex slice \( j (\lambda \alpha_{\mathrm{v},1} + (1-\lambda)\alpha_{\mathrm{v},2} ) / {\nn^2} \) for \( \alpha_{\mathrm{v},1} = \begin{psmallmatrix} 1 & 0 \end{psmallmatrix}^\top \hspace{-0.3em} \), \( \alpha_{\mathrm{v},2} = \begin{psmallmatrix}0 & 1 \end{psmallmatrix}^\top \hspace{-0.3em}\), and different values of \(\lambda \in [0,1]\).}
\label{fig:Convex_Contours}
\end{figure}

At first glance, it would seem that FISTA is not the best algorithm for solving the optimal control problem \eqref{Optimal_Control_Problem}. Notwithstanding, the use of the surrogate function \(Q_L\) in \Cref{alg:FISTA-BT}, line \ref{alg:FISTA-BT-Surrogate}, works as a majorisation--minimisation step that considers a convex model of the reduced functional; see \eqref{eq:surrogate}. Hence, FISTA could be used in principle to find local minima in case that the reduced functional can be well approximated by a convex function. 
The previous example illustrates this idea to solve the optimal control problem \eqref{Optimal_Control_Problem}. The contour plots of the reduced objective in \Cref{fig:Convex_Contours} on the region that contains its minimisers shows the presence of a sharp minimiser\footnote{We say a point \(\alpha\) is a sharp minimum for the optimisation landscape \( \{f(\omega): \omega\in\Acal\} \) if \(\alpha\) is a minimiser and small changes around this point lead to significant changes in the function value.} inside a cone. Hence, a convex approximation of this cone can guide an optimisation routine to obtain such a minimiser.

The usage of such surrogate functions has inspired the application of proximal--based algorithms in the literature for non--convex optimisation; see \cite{Kanzow2022} for a review and \cite[Chapter 4]{Lin2020} for several extensions.
In particular, much focus has been put on extending the components of FISTA that accelerate the optimisation process: (i) the momentum or extrapolation sequence \( (\omega_k)_{k\in \N} \), (ii) the inertial weights \( (\theta_k)_{k\in \N} \), and (iii) the backtracking step that approximates the Lipschitz constant \(L\). 

An extension of FISTA for \(j\) non--convex and \(\iota\) convex (and possibly nonsmooth) is studied in \cite{Wen2017}. Here, the method recovers the restarted FISTA iteration \cite{ODonoghue2013,Alamo2019,Aujol2023} for the convex case under a suitable choice of stepsize. The method assumes that \(j\) can be decomposed as the difference of two convex functions.
The variant of \cite{Wright2009} considers a further step by letting \(\iota\) be non--convex but separable, thus generalising the indicator case of \cite{Birgin2000}. 
Here, the stepsize proposed by Barzilai and Borwein \cite{Barzilai1988} is employed to approximate Hessian information.

In \cite{Kanzow2022}, the convergence of proximal--based algorithms for the sum of a non--convex differentiable function and a lower semicontinuous function is analysed.
In particular, a variant of FISTA without momentum and inertia is analysed. The algorithm allows the use of a limited number of nonmonotone steps (i.e., allowing \(F\) to increase), while \(j\) is only assumed to be locally Lipschitz.
We refer the reader to \cite{Bolte2018,Cohen2021,DeMarchi2023}, and the references therein, for other approaches requiring just local Lipschitz continuity. In general, these methods use a monitoring sequence as part of a linesearch procedure but lack inertial acceleration. For our particular needs, we know \(j\) is Lipschitz in \(\AcalV\) due to the regularity of the solution map. However, we can extend \(j\) outside of \(\AcalV\) using a smooth partition of unity 
for the gradient, thus we can safely assume that \(j\) is globally Lipschitz. 
This enables the use of globalised accelerated schemes.

A general extrapolation version of the proximal gradient method is presented in \cite{Ghadimi2016}, also allowing for the use of inertia. Convergence of the algorithm and a randomised variant are analysed. However,
the acceleration steps are only valid under strong assumptions on the inertia weights that depend on a global Lipschitz constant \cite{Yang2023}.
Hence, \cite{Yang2023} proposes the use of nonmonotone steps considered with respect to a surrogate zero--order approximation of \(F\) given by the sum of \(F\) and a strongly convex function.
However, knowledge of the Lipschitz constant is still necessary.
In 
\cite{Li2015},
the nmAPG method is introduced. The algorithm uses a monitoring sequence consisting of a convex combination of historical objectives with exponentially decreasing weights. By combining extrapolation, inertia, and a correction step, the method requires at most two gradient evaluations per iteration. Moreover, its backtracking variant does not require any information from the Lipschitz constant and incorporates the Barzilai--Borwein approximation. 
The method is compared computationally with alternative approaches in \cite{Yang2023} and \cite{Liu2024}. In the former, comparable results are achieved for a class of least--squares problems. By contrast, the latter extension leads to a smaller iteration count at the expense of two additional gradient evaluations.

From this discussion, we have selected nmAPG with backtracking as our third first--order method. The steps of the method are presented in \cref{alg:nmAPG-BT}. We note that the extrapolation and correction steps are, in essence, two FISTA subiterations; see \cref{alg:FISTA-BT}.


\begin{algorithm}[h]
\begin{small}
\begin{algorithmic}[1]
\STATE \textbf{Input:} Initial guess \((\alpha_{\mathrm{v},0}, \alphaF) \in \Acal\),  $\alpha_{\mathrm{v},1} = \omega_1 = \nu_0 = \alpha_{\mathrm{v},0}$.
\STATE Select $\theta_0 = 0$, $\theta_1 = 1$, $\mu \in [0,1)$, $\delta >0$, $\eta > 1$, $c_1 = j(\alpha_{\mathrm{v},1})$, $\lambda_1 = 1$, \( 0 < \ell_{\min} \leq \ell_{\max} \).
\STATE Set \(k := 1\).
\WHILE{ stopping criteria has not been met }
	\STATE Set     \\[0.1em]
    \(
        \begin{cases}
        \nu_{k} = \alpha_{\mathrm{v},k} + \frac{\theta_{k-1}}{\theta_k} (\omega_k - \alpha_{\mathrm{v},k} ) + \frac{\theta_{k-1} -1 }{\theta_k} (\alpha_{\mathrm{v},k} - \alpha_{\mathrm{v},k-1})  ,
    \\
        s_{k} = \nu_k - \nu_{k-1},
        \qquad
        r_k = \nabla j(\nu_k) - \nabla j(\nu_{k-1}) ,
    \qquad
        L_k = \mathcal{P}_{[\ell_{\min}, \ell_{\max}]} \left( \nicefrac{s_k^\top r_k}{s_k^\top s_k} \right)  .
        \end{cases}
    \) \\[0.1em]
    
	\STATE Find the smallest \(i_k \in \N\) such that \\  
	\(
		\begin{cases}
			L = \eta^{i_k} L_{k},
            \qquad
            \omega_{k+1} = P_L(\nu_{k}) ,
			\\
			j(\omega_{k+1}) \leq \max\big\{c_k,j(\nu_k)\big\} - \delta \| \omega_{k+1} - \nu_k \|^2 .
		\end{cases}
	\)
	\IF{$j(\omega_{k+1}) \leq c_k - \delta \| \omega_{k+1} - \nu_k \|^2$}
        \STATE Set $\alpha_{\mathrm{v},k+1} = \omega_{k+1}$.
    \ELSE
        \STATE Set     
        \(
        s_{k} = \alpha_{\mathrm{v},k} - \nu_{k-1},
        \qquad
        r_k = \nabla j(\alpha_{\mathrm{v},k}) - \nabla j(\nu_{k-1})     ,
        \qquad
        L_k = \mathcal{P}_{[\ell_{\min}, \ell_{\max}]} \left( \nicefrac{s_k^\top r_k}{s_k^\top s_k} \right).
        \) \\[0.1em]
        
        \STATE Find the smallest \(i_k \in \N\) such that \\ 
	\(
		\begin{cases}
			L = \eta^{i_k} L_{k},
            \qquad
            \xi_{k+1} = P_L(\alpha_{\mathrm{v},k}),
			\\
			j(\xi_{k+1}) \leq c_k - \delta \| \xi_{k+1} - \alpha_{\mathrm{v},k} \|^2.
		\end{cases}
	\)
        \STATE Assign
        \( \alpha_{\mathrm{v},k+1} = 
        \begin{cases}
            \omega_{k+1} &\text{if }  j(\omega_{k+1}) \leq j(\xi_{k+1}),
                                    \\
                                    \xi_{k+1} &\text{otherwise}.
        \end{cases}
        \)
    \ENDIF
	
	\STATE Update $\theta_{k+1} = \frac{1}{2} \Big[ 1 + \sqrt{ 1 + 4 \theta_k^2 } \Big]$.
	\STATE Update $\lambda_{k+1} = \mu \lambda_k + 1$.
    \STATE Update $c_{k+1} = \lambda_{k+1}^{-1} \big[ \mu \lambda_k c_k + j(\alpha_{\mathrm{v},k+1}) \big]$.
	\STATE Increase \(k \to k+1\).
\ENDWHILE
\end{algorithmic}
\vspace{0.1em}
\caption{nmAPG with backtracking approach of \cite{Li2015}}
\label{alg:nmAPG-BT}
\end{small}
\end{algorithm}

\subsection{A quasi--Newton method}

The last solution technique that we will test is the active--set limited memory projected trust region algorithm for box constrained optimisation proposed in \cite{Yuan2014}. Let us discuss some preliminaries before presenting the method. In what follows, the only notation related to the problem \eqref{Optimal_Control_Problem} is the parameter vector \(\alpha\) and the reduced cost \(j\), and we assume for simplicity of exposition that \(\AcalV = \Acal\) and \(\AcalF=\varnothing\).

The limited memory BFGS method approximates the inverse of the Hessian of a functional $j: \R^n \to \R$ at iteration $k+1$, say $H_{k+1}$, without storing the dense matrices $H_k$ at each iteration. Instead, it stores $m_c \ll n$ correction pairs $ \{q_i, d_i\}_{i\in \{k-1, ..., k-m_c\}} \subset\mathbb{R}^{n,2} $, where
\(
    q_i \coloneqq \alpha_{i+1} - \alpha_{i}
\)
and
\(
    d_i \coloneqq \nabla j(\alpha_{i+1}) - \nabla j(\alpha_{i}),
\)
that contain information related to the curvature of $j$. In \cite{Byrd1994}, the authors introduce a compact form to define the limited memory matrix $ B_k = H_k^{-1} $ in terms of the $ n\times m_c $ correction matrices
\(
    S_k \coloneqq
        \begin{pmatrix}
            q_{k- m_c} & \cdots & q_{k-1}
        \end{pmatrix}
\) and
\(
    Y_k \coloneqq
        \begin{pmatrix}
            d_{k-m_c} & \cdots & d_{k-1}
        \end{pmatrix}.
\)
The matrix $S_k^\top Y_k$ can be written as the sum of the three matrices via
\(
    S_k^\top Y_k = L_k + D_k + R_k,
\)
where $L_k$ is strictly lower triangular, $D_k$ is diagonal, and $R_k$ is strictly upper triangular.

For $B_0 \coloneqq \bar{\theta} I_n$ with $\bar{\theta} > 0$, if the correction pairs satisfy $ q_i^\top d_i > 0 $, then the matrix obtained by updating $B_0$ with the BFGS formula and the correction pairs after $k$--times can be written as
\begin{equation}
\label{eq:LM-Update}
    B_k \coloneqq \bar{\theta} I_n - W_k M_k W_k^\top,
\end{equation}
where $W_k$ and $M_k$ are the block matrices given by
\vspace{-1em}
\begin{align*}
	W_k \coloneqq \begin{pmatrix}	Y_k & \bar{\theta} S_k \end{pmatrix}
	\qquad\text{and}\qquad
	M_k \coloneqq \begin{psmallmatrix}	-D_k & L_k^\top \\  L_k &  \bar{\theta} S_k^\top S_k	\end{psmallmatrix}^{-1} \hspace{-0.3em}.
\end{align*}
Note that, as $M_k$ is a $2 m_c \times 2 m_c$ matrix, the cost of computing the inverse in the last formula is negligible. Hence, using the compact representation for $B_k$, various computations involving this matrix become inexpensive, as is the case of the product of $B_k$ times a vector.
A similar representation to \eqref{eq:LM-Update} can be obtained for the inverse \(H_k\), and we refer the reader to \cite{Yuan2014} for further details.

Let \( u_b \in \R_{> 0}^3\) be the vector of natural upper bounds\footnote{The results of \cref{sec:Existence} assume \( u = \mathbf{1}_3\), but tighter bounds are allowed.} on \(\Acal\); i.e., \( \Acal = [0,u_1] \times [0,u_2] \times [0,u_3]\). 
The projection operator associated with the parameter set (understood entry--wise) becomes
\(
    \proj_{\Acal} \cdot = \max\big\{ 0, \min \{ \cdot, u_b \}  \big\}.
\)
Moreover, let us define the minimal diameter of \(\Acal\) by \( \ell_{\Acal} \coloneqq \min u_b\).

Now let us introduce the active set strategy. 
We introduce the quantity $\xi_k := \min\big\{\psi, c \|\nabla j(x_k)\|_2^{\zeta} \big\} $, where $\psi$, $c$, and \(\zeta\) are positive constants such that $\psi \in (0,\nicefrac{ \ell_{\Acal} }{2}) $, \(\zeta \in (0,1)\) \cite{Kanzow2001}, and define the strongly--active and inactive index sets by
\begin{subequations}
\label{ec:active_set_estimation}
\begin{align}
    A_k &\coloneqq \big\{ i \in \{1, \dots,n\}: \, \alpha_{k,i} 
            \leq \xi_k		 \lor 		\alpha_{k,i} \geq u_i -  \xi_k \big\},	
            \label{ec:active_set}
    \\
    I_k &\coloneqq \{1, \dots,n\} \setminus A_k = 
            \big\{ i \in \{1, \dots,n\}: \, \xi_k < \alpha_{k,i} < u_i -  \xi_k \big\},
    	\label{ec:inactive_set}
\end{align}
\end{subequations}
respectively, where $\alpha_{k,i}$ is the $i$--th element of $\alpha_k$.
Now, suppose the current trust region radius is $\widehat{\Delta} >0$, with its maximum value $\Delta_{\max} > 0$, and let $\kappa_k > 0$. We can obtain a search direction at step $\alpha_k$ as follows:

\noindent \textbf{Projected gradient direction:} Compute
\begin{equation}\label{ec:Yuan-proy_grad}
	d^{\text{G}}_{*k} (\widehat\Delta) := \proj_{\Acal} \left\{ \alpha_k - \dfrac{\widehat{\Delta} }{\Delta_{\max} }  \kappa_k \nabla j(\alpha_k) \right\}  - \alpha_k.
\end{equation}

\noindent \textbf{Projected trust--region direction:} We look for a direction $d^{\text{tr}}_{*k} (\widehat\Delta)$ defined for each index of the sets $A_k$ and $I_k$, respectively.
We begin with $A_k$, for which we let $v_k^{A_k}$ be the subvector
\[
    v_k^{A_k} \coloneqq
    \begin{cases}
        \alpha_{k,i}     & \text{if } \alpha_{k,i} \leq \xi_k,  \\
        u_i - \alpha_{k,i} & \text{if } \alpha_{k,i} \geq u_i - \xi_{k}.
    \end{cases}
\]
Then we define the subvector
\begin{equation}
    d^{A_k}_{*k} (\widehat\Delta)  \coloneqq \min \left\{ 1, \dfrac{ \widehat\Delta }{\|v_k^{A_k}\|} \right\} v_k^{A_k}.
    \label{ec:Yuan-SV}
\end{equation}
For the inactive set $I_k$ we solve a reduced trust--region subproblem. Here, let $B_k$ be partitioned into two submatrices $B_k^{A_k} $ and $B_k^{I_k} $, obtained by taking columns of $B_k$ indexed by $A_k$ and $I_k$, respectively. Let $ d^{I_k}_{*k} (\widehat\Delta) $ be a solution of the following TR–subproblem:
\begin{equation} \label{ec:subvector_inactive}
\begin{aligned}
        \min \;\, & d^\top \Big[ \big(B_k^{I_k}\big)^\top \big( \nabla j(\alpha_k) + B_k^{A_k} d^{A_k}_{*k}  \big) \Big] + \dfrac{1}{2} d^\top \big(B_k^{I_k}\big)^\top B_k^{I_k} d\\
\text{s.t. } & \|d\| \leq \widehat\Delta.
\end{aligned}
\end{equation}
The projected trust--region direction is then defined as
\begin{equation}
\label{eq:Yuan-TR_Direction}
	d^{\text{tr}}_{*k} (\widehat\Delta)
	\coloneqq 
    	\proj_{\Acal} \left\{ \alpha_k + \begin{pmatrix} d^{A_k}_{*k} (\widehat\Delta) \\ d^{I_k}_{*k} (\widehat\Delta)  \end{pmatrix}\right\}  - \alpha_k.
\end{equation}
Since this direction may not be a descent direction for $j$ for iterates away from the minimisers, we use a convex combination with the gradient direction as follows.

\begin{algorithm}[ht]
\begin{small}
\begin{algorithmic}[1]
\STATE \textbf{Input:} Initial guess \(\alpha_0 \in \Acal\), a symmetric positive definite matrix \(H_0\). Let constants satisfy \(\psi \in (0, \nicefrac{\ell_{\Acal}}{2}) \), \(c>0\), \(\zeta \in (0,1)\), 
\(0 < \nu_{\text{decrease}} < 1 < \nu_{\text{increase}}\), 
\(0 < \tau_{\text{accept}} < \tau_{\text{increase}} <1\), 
\(\sigma \in (0,1)\), 
\(\omega \in (0,1)\), 
\(\Delta_0 >0\), and 
\( \Delta_{\max} > \Delta_{\min} > 0\). 
Set \(m_c \in \mathbb{N}\) and \(B_0 = H_0^{-1}\).
\STATE Set \(k := 0\).
\WHILE{ stopping criteria has not been met }

	\STATE Let \(\Delta_k \coloneqq \min\big\{ \Delta_{\max}, \max\{\Delta_{\min}, \Delta_k\} \big\}\) and \(\widehat\Delta = \Delta_k\). 
	\label{alg:BFGS-Set-TR}
	
	\STATE \emph{Active--set estimation:} 
		Determine index sets \(A_k\) and \(I_k\) by \eqref{ec:active_set_estimation}.
		
	\STATE \emph{Trust--region subproblem:} 
		Find \(d_{*k}^{\text{tr}} (\widehat\Delta)\) by determining \(d_{*k}^{A_k} (\widehat\Delta )\) and \(d_{*k}^{I_k } (\widehat\Delta )\) as in \eqref{ec:Yuan-SV}, \eqref{ec:subvector_inactive}, and \eqref{eq:Yuan-TR_Direction}.
		\label{alg:BFGS-TR-Direction}
		
	\STATE \emph{Search direction:} Set
	\[
		\kappa_k 
		\coloneqq
		\min\left\{ 1, \dfrac{\Delta_{\max}}{\big\| \nabla j(\alpha_k) \big\|}, \dfrac{\omega}{\big\| \nabla j(\alpha_k) \big\|} \right\}.
	\]
	\STATE Compute \( d_{*k}^{\text{G}} (\widehat\Delta)\) and \(t_{*k}\) as in \eqref{ec:Yuan-proy_grad} and \eqref{ec:search_dir_one_dim}, respectively. Let
    \[
        d_{*k} (\widehat\Delta) 
        \coloneqq
        t_{*k} d_{*k}^{\text{G}} (\widehat\Delta) + (1- t_{*k}) d^{\text{tr}}_{*k} (\widehat\Delta).
    \]
    	
	\STATE \emph{Test the search direction:} Compute
	\[
		r_{*k} \coloneqq 
		\dfrac{ j(\alpha_k + d_{*k}) - j(\alpha_k)}{ \nabla j(\alpha_k)^\top d_{*k} (\widehat\Delta) + \frac{1}{2} \, d_{*k} (\widehat\Delta)^\top B_k d_{*k} (\widehat\Delta)}.
	\]
	
	\IF{ \(j(\alpha_k) - j \big(\alpha_k + d_{*k} (\widehat\Delta) \big) \geq -\sigma \nabla j (\alpha_k)^\top d^{\text{G}}_{*k} (\widehat\Delta)\) and \(r_{*k} \geq \tau_{\text{accept}}\) hold }
	\label{alg:LMBFGS-ConvexComb}
	
		\STATE Let \(q_k \coloneqq d_{*k}\), \(\alpha_{k+1} \coloneqq \alpha_k + d_{*k}\), and
		\[
			\Delta_{k+1} 
			\coloneqq
			\begin{cases}
				\hspace{1.5em} \widehat\Delta & \text{if } \tau_{\text{accept}} \leq r_{*k} < \tau_{\text{increase}},
				\\
				\nu_{\text{increase}}\, \widehat\Delta & \text{if } r_{*k} \geq \tau_{\text{increase}}.
			\end{cases}
		\]
		
		\STATE Let \(\widehat{m} := \min\{k+1,m_c\}\).
		\STATE Update \(B_k\) with the \(n \times \widehat{m}\) matrices \(S_k\) and \(Y_k\) to get \(B_{k+1}\).
		\STATE Let \(k \to k+1\) and return to line \ref{alg:BFGS-Set-TR}.
	
	\ELSE
		
		\STATE Let \(\widehat\Delta = \nu_{\text{decrease}} \widehat\Delta\)
		and return to line \ref{alg:BFGS-TR-Direction}.
	\ENDIF
\ENDWHILE
\end{algorithmic}
\vspace{0.1em}
\caption{Active--set Limited Memory BFGS Trust Region approach of \protect\cite{Yuan2014}}
\label{alg:LMBFGS}
\end{small}
\end{algorithm}

\noindent \textbf{Search direction:} Let
\(
	d_{* k} (\widehat\Delta)  
	\coloneqq
	t_{*k}\, d_{*k}^{\text{G}} (\widehat\Delta) + (1- t_{*k}) \, d^{\text{tr} }_{*k} (\widehat\Delta),
\)
where $t_{*k}$  is a solution of the following one--dimensional problem:
\begin{equation}
	\label{ec:search_dir_one_dim}
	\min_{s \in [0,1]} j\big( \alpha_k + s \, d_{*k}^{ \text{G} } (\widehat\Delta) + (1- s)\, d^{\text{tr} }_{*k} (\widehat\Delta) \big).
\end{equation}

With this background, we present the active--set limited memory approach of \cite{Yuan2014}, which we label LM--BFGS, in \Cref{alg:LMBFGS}. Notice that for every functional evaluation of \(j\), we have to solve the state system or forward problem \eqref{eq:OCP_ode-sys}, which can be done efficiently using an adaptive step method due to the regularity of the state; see \Cref{th:state_existence}. In principle, the most time consuming operations are finding \(t_{*k}\) in \eqref{ec:search_dir_one_dim} and computing line \ref{alg:LMBFGS-ConvexComb}. However, observe that the former is just a one--dimensional optimisation problem, for which we can use any out of the box optimiser, and, furthermore, we do not need an exact optimal solution, as line \ref{alg:LMBFGS-ConvexComb}
would reject any unacceptable step.

\subsection{Stopping criteria}

So far we have not discussed the stopping criteria of PGD and \Cref{alg:FISTA-BT,alg:nmAPG-BT,alg:LMBFGS}. Let \(\ell_\alpha\) be the length of \(\alpha_{\mathrm{v},k}\); i.e., \(\ell_\alpha \coloneqq \mathrm{length} (\alpha_{\mathrm{v},k})\). We will focus on three criteria: 
(a) iteration count; i.e., \(k \leq \mathrm{it_{\max}}\),
(b) the relative norm between iterates being less than a given tolerance:
	\(
		{ \|\alpha_{\mathrm{v},k+1} - \alpha_{\mathrm{v},k}\|_2 } < \mathtt{tol_a} {\sqrt{\ell_\alpha}},
	\)
(c) Himmeblau stopping rules; i.e., the absolute (relative) difference between two subsequent objectives being less than a given tolerance:
	\(
		|j(\alpha_{\mathrm{v},k+1}) - j(\alpha_{\mathrm{v},k})|  < \mathtt{tol_b}
    \)
    or
    \(
		|j(\alpha_{\mathrm{v},k+1}) - j(\alpha_{\mathrm{v},k})|  < \mathtt{tol_b}\, j(\alpha_{\mathrm{v},k}).
	\)
We will also report if the gradient satisfies the variational inequalities; i.e., \( [\nabla j(\alpha_{\mathrm{v},k})]_i = 0 \) if  \( [\alpha_{\mathrm{v},k}]_i     \in \mathrm{int}(\AcalV)\), \( [\nabla j(\alpha_{\mathrm{v},k})]_i \geq 0 \) if \( [\alpha_{\mathrm{v},k}]_i = \min \pi_i [\AcalV]\),  and \( [\nabla j(\alpha_{\mathrm{v},k})]_i \leq 0 \) if \( [\alpha_{\mathrm{v},k}]_i = \max \pi_i [\AcalV]\),
for all \( i \in \llb 1, \ell_\alpha \rrb \), and with \(\pi_i\) the canonical \(i\)--th projection operator and \( \mathrm{int}(\AcalV) \) the interior of \(\AcalV\).

\section{Numerical experiments}\label{sec:numerics}

In what follows we present the results of applying PGD and \Cref{alg:FISTA-BT,alg:nmAPG-BT,alg:LMBFGS} to the  optimal control problem \eqref{Optimal_Control_Problem}. In particular, we analyse three families of tests:
\begin{itemize}
	\item A given initial parameter vector \( \widehat{\alpha} \in \Acal\) is fixed, and the state system is evaluated to obtain \(\widehat{\rho}_a\). The aim is to fit the ODE model \eqref{eq:ode-sys} using the known data \(\widehat{\rho}_a\); hence, the optimal fitting is zero. Observe that \( \od{\widehat{\rho}_a}{t}\) is known solely by evaluating the state system. This test showcases the efficacy of each method when solving a problem with a unique known solution.
	
	\item We smoothly transform a solution of the state system, and then we average it on a finite number of intervals to obtain an upper semicontinuous function \(\widehat{\rho}_b \). This function mimics the rolling average methodology for epidemiological data. This test aims to study the influence of and robustness with respect to the regularisation parameter \(\theta\) against the given target.
	
	\item We present an application with data from the COVID--19 pandemic in Singapore. We also penalise a terminal condition within the objective function and allow time--dependent controls.
\end{itemize}

All the tests are run on a MacBook Pro 2020 M1 with 16 GB RAM. Both state and adjoint systems are solved using the \texttt{SciPy} explicit Runge--Kutta (RK) method of order 5(4) with default settings \cite{Virtanen2020}. The RK method may require the evaluation of both state and target at different time steps than the ones recovered after solving the state system. Thus, there is a need to interpolate the values of each state and target. As a result, we discretise time using Chebyshev points of the first kind, which help to reduce the effect of Runge's phenomenon \cite{Boyd2001,Berrut2004}. 
For the interpolation itself, we use a cubic Hermite interpolator spline for smooth data and linear Lagrange interpolation for discontinuous data. Finally, for the evaluation of the objective function, we use the composite Simpson’s rule.

\subsection{Known solution}

For this set of experiments, we fix the known parameter vector \( \alpha^* = \begin{psmallmatrix} \nicefrac{3}{100} & \nicefrac 3 5 & 0 \end{psmallmatrix}^\top \hspace{-0.3em} \), total population \(\nn = 200\) with the initial split \(\rho_0 = \begin{psmallmatrix} 199 & 1 & 0 \end{psmallmatrix}^\top \hspace{-0.3em} \), and final time \(T = 10\). 
The sensitivity \eqref{eq:SIRD_Sensitivity} associated to $\alpha^*$ is given by the vector \( \Phi(\mathcal{R}_0 | \alpha^*) = \begin{psmallmatrix} 1 & -1 & 0\end{psmallmatrix}^\top\). As a result, in what follows we consider the mortality rate as a fixed parameter; i.e., \( \AcalF = \{m=0\} \) and let \( \AcalV = [0,1]^2\). Thus, the gradient system reduces to
\begin{equation*}
	\alphaV = \proj_{\AcalV} \bigg\{ \alphaV - \kappa
	\int\limits_0^T
	\begin{psmallmatrix}	
				\rS \rI (\qI - \qS ) 	\\
		\phantom{\rS} \rI (\qR - \qI)
	\end{psmallmatrix} 
	+ \dpd{r}{\alphaV} \big( S(\alphaV), \alphaV \big)
	\dif t
	\bigg\}
.
\end{equation*}

In all tests, \(200\) Chebyshev points are used inside the interval \( [0,T]\), plus the two endpoints. The choice of \(\alpha^*\) and \(\rho_0\) yield the basic reproduction number \(\mathcal{R}_0 = \frac{[\nn \beta]}{\gamma + m} = 10\), which is greater than one and so is associated epidemiologically with an outbreak; i.e., the infected compartment attains a global maximum at some \(t \in (0,T)\) before the disease dies out \cite{Hethcote1976}. Finally, \(\alpha_{\mathrm{v},0}\) is always set to \((\mathtt{0.63696169}, \mathtt{0.26978671})^\top \hspace{-0.3em}\), which is a randomly generated vector. 

For the first--order methods, we use a discrepancy tolerance \( \mathtt{tol_a} = 10^{-7}\), Himmeblau's absolute tolerance \( \mathtt{tol_b} = \num{5e-13}\), and a maximum number of iterations \( \mathrm{it_{\max}} = 10\,000\). In LM--BFGS, we use the same Himmeblau criterion, but the discrepancy tolerance is replaced by the minimum trust region radius \(\Delta_{\min} = 10^{-6}\), and we choose \( \mathrm{it_{\max}} = 100\).
We do not control the discrepancy in LM--BFGS, as the distance between successive iterations is adapted by the active set estimation.

Observe that by the existence and perturbation results (see \Cref{th:state_existence} and \eqref{sys:perturbation_parameters}), two trajectories of the state system cannot overlap. As a result, beside the target state \( \widehat{\rho}_a\)  associated with the control \(\alpha^*\), there is no other solution to the optimal control problem \eqref{Optimal_Control_Problem} that can attain a zero fitting cost; thus, it is not necessary to use a regularisation term in the objective function. This justifies the choice of the following objective:
\begin{equation}
\label{eq:First-Objective}
	J(\rho,\alphaV) \coloneqq \frac{1}{2} \int\limits_0^T | \rS - \hrS |^2 + |\rI - \hrI|^2 + |\rR - \hrR|^2 \dif t.
\end{equation}

\Cref{tab:ODE-Optimisation} contains a summary of the results for PGD and \Cref{alg:FISTA-BT,alg:nmAPG-BT,alg:LMBFGS}. For each algorithm, the following results are reported: number of iterations (for FISTA and nmAPG, we include the iteration number that attains the lowest objective), the time the algorithm takes to run, the estimated values of \(\beta\) and \(\gamma\) for the lowest objective value rounded to five decimal places, the best attained objective, the value of the gradient at each direction, and the normalised \(\ell^2\) norm of the gradient \( \|\cdot\|_{2,r} \coloneqq \sV^{-\nicefrac 1 2} \|\cdot\|_2  \). The reported values for the objective and gradient are scaled down by \(\nicefrac{1}{\nn^2}\) which allows us to compute \eqref{eq:First-Objective} in terms of proportions of the total population.

\begin{table}[h]
\centering
\fontsize{9.5}{10.5}\selectfont
\setlength{\tabcolsep}{2.pt}
\def\arraystretch{1.4}
\begin{tabular}{ccc cc c ccc}
\toprule
\multirow{2}{*}{Algorithm} & {Iteration} & Time & \multicolumn{2}{c}{Best \(\alpha_{k^*}\)} & Objective & \multicolumn{3}{c}{Gradient} \\
\cline{4-5} \cline{7-9}
 & count  & (seconds)  &   \(\beta_{k^*}\) & \(\gamma_{k^*}\) & \(j(\alpha_{k^*})\)       &    \(\partial_\beta j(\alpha_{k^*})\) & \(\partial_\gamma j(\alpha_{k^*})\) & \(\| \nabla j(\alpha_{k^*}) \|_{2,r}\)       \\
\midrule
	PGD 	
	& 1697       & 27.37   & 0.03000 & 0.59998     	& \num{6.3 e-10} 	& \num{-3.7e-4} & \num{-5.1e-05}  & \num{2.7e-4}
	\\
         FISTA       
         & 363 (314)   & 2.66    & 0.02999 & 0.59999 & \num{1.9e-10} & \num{1.0e-5}       	& \num{-2.7e-5}   & \num{2.1e-5}       
    \\
        nmAPG
        & 294 (275)  & 4.69 & 0.03000 & 0.59998 & \num{1.4e-9} & \num{2.0e-3} & \num{-3.3e-5} & \num{1.4e-3}
         \\
         LM--BFGS 
			& 50  & 7.61   & 0.03000    & 0.59999  & \( \mathbf{ 1.0 \times 10^{-10}}\)   & \num{2.9e-5} & \num{-2.0e-5} & \num{2.5e-05}
	\\
\bottomrule
\end{tabular}
\caption{Optimisation results for the  optimal control problem \eqref{Optimal_Control_Problem} with cost functional \eqref{eq:First-Objective} applying PGD and \Cref{alg:FISTA-BT,alg:nmAPG-BT,alg:LMBFGS}. The index \(k^*\) is the one corresponding to the smallest objective value.
\review{The best objective value is highlighted in \textbf{bold}.}
}
\label{tab:ODE-Optimisation}
\end{table}

It is not surprising that PGD underperforms the other three algorithms in terms of number of iterations. FISTA follows with around one fourth of the iteration count, nmAPG yields a slight reduction, while LM--BFGS needs around one sixth of these. This can be understood by the linear convergence in terms of the iterates for PGD, FISTA, and nmAPG. We can observe the impact of the inner loops for nmAPG and LM--BFGS in terms of time, as the additional state systems take a toll on the performance of the algorithm.
Notwithstanding, the attained objectives of three out of four algorithms are of the same order of magnitude as well as their scaled gradients. This is due to the fact that the three methods are able to approximate \(\alpha^*\) up to five significant figures. By contrast, nmAPG still yields a close result but is not able to beat FISTA.
We can see this performance in \Cref{fig:Exact_Contours}. There we can observe that both FISTA and PGD have a good start, but as they approach the global minimum, their performance is damped by the linear convergence. For both plots, the reduced objective is computed around \num{4.5e+4} times, which is more than 25 times the number of iterates that PGD required to obtain a good approximation.

\begin{figure}[ht]
\centering
	\includegraphics[scale=0.37]{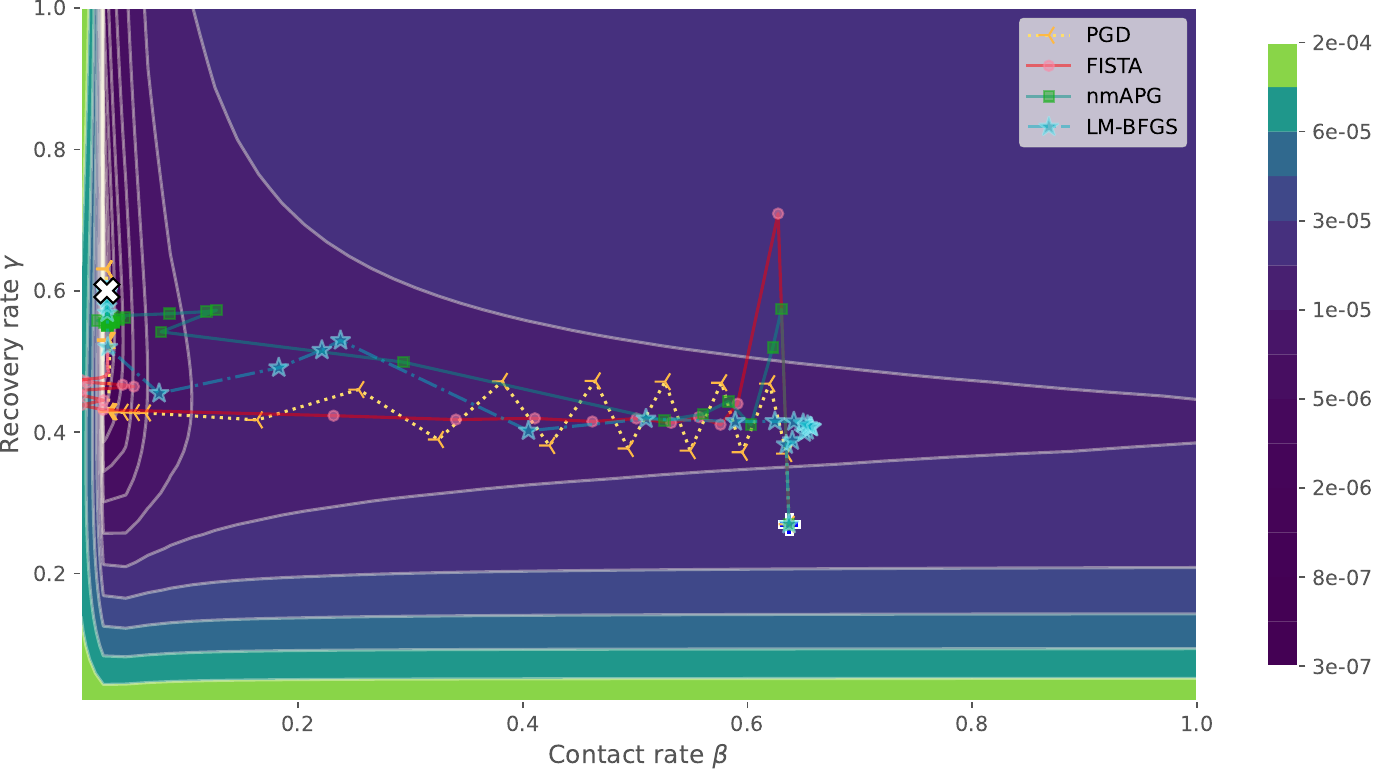}
	\includegraphics[scale=0.37]{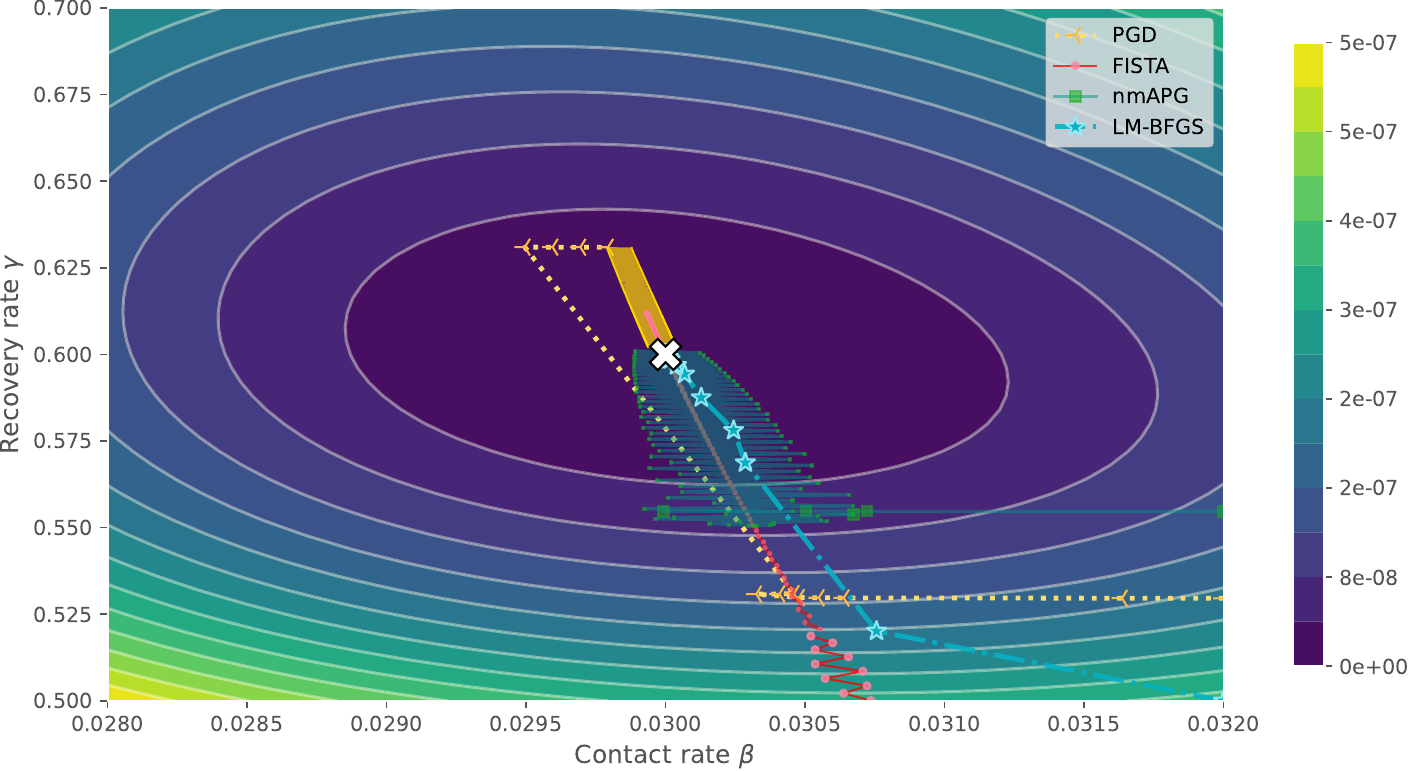}
\caption{Contour plots for the reduced objective \(j\) with known target \(\widehat{\rho}_a\). The panel on the left is the result of evaluating the scaled reduced objective function \(( \nicefrac{j(\alphaV)}{\nn^2} )\) over the set of feasible controls \(\AcalV = [0,1]^2\). 
The panel on the right zooms into the subregion \([ 0.028,0.032] \times [0.5, 0.7]\). 
In each panel, the optimisation path given by the sequence of iterates \( \{\alpha_{\mathrm{v},k}\}\) generated by each algorithm is illustrated. The \texttt{+} marker at the left panel is the starting point, while the \texttt{x} marker in both plots points at the optimal solution. }
\label{fig:Exact_Contours}
\vspace{-0.5\baselineskip}
\end{figure}

The evolution of the objective values at each iteration of the four algorithms is displayed in \Cref{fig:Exact_Costs}. We can observe how the quasi--Newton algorithm outperforms the first--order methods, attaining the smallest objective value found. All three first--order methods obtain a value of \(j\) below \( 10^{-2}\) quite quickly, yet the three algorithms stall after that and would take several hundreds of iterations to achieve a significant decrease in the objective function. 

\begin{figure}[htbp]
\centering
	\includegraphics[scale=0.55]{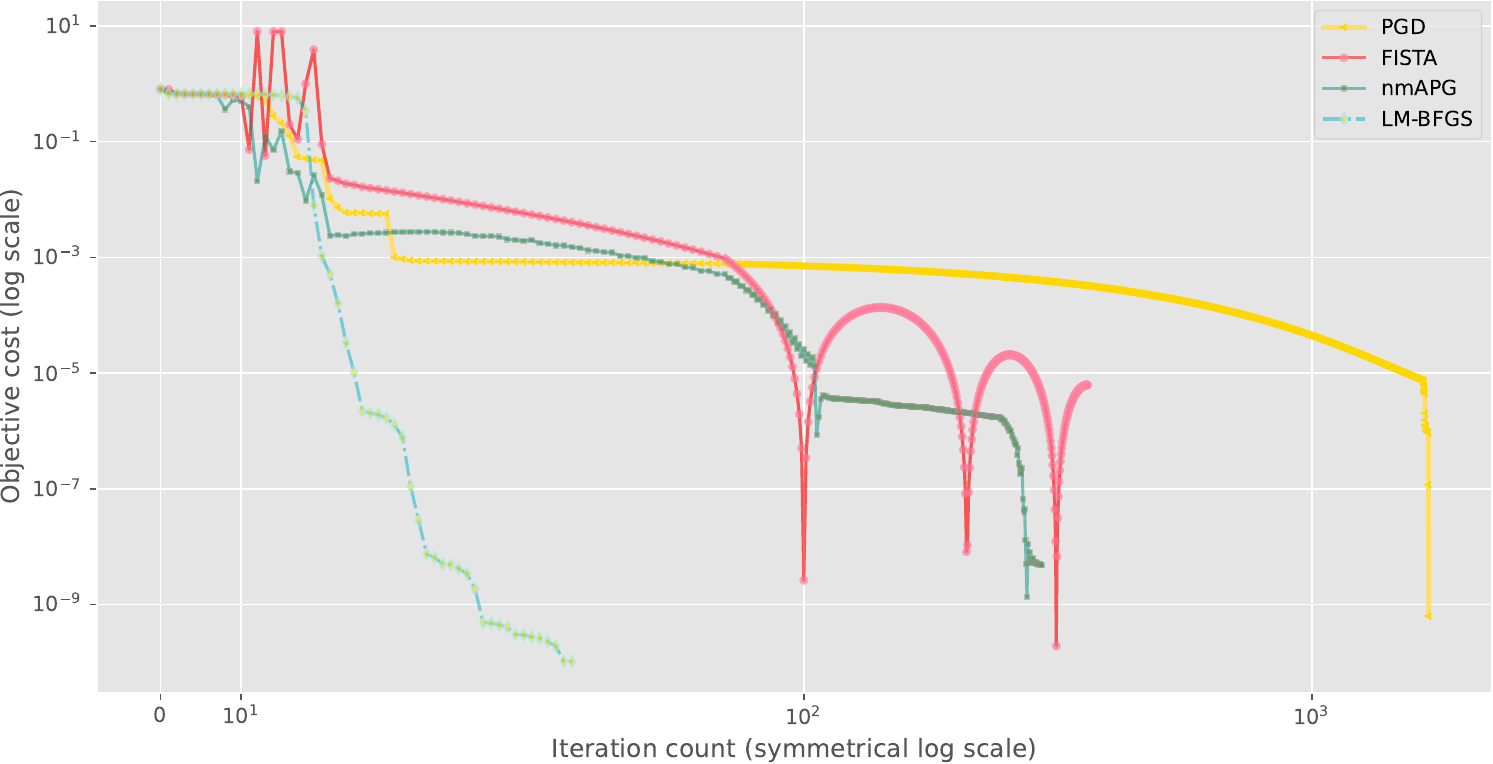}
\caption{Evolution of the reduced objective against number of iterations for each algorithm.}
\label{fig:Exact_Costs}
\vspace{-0.5\baselineskip}
\end{figure}

With this example, we have shown that the four algorithms solve the optimal control problem \eqref{Optimal_Control_Problem} effectively. They allow us to find good approximations of the optimal solution up to five decimal places (in most cases), and the number of required iterations is significantly smaller than that required with a grid search.

\subsection{Regularisation}

In this set of tests, we explore the rôle of the regularisation parameter when the existence of a unique minimiser is not apparent. Tikhonov regularisation consists of adding a penalty term to the running cost to penalise large values of \(\alpha\). It is often used in the context of control--constrained optimal control as it not only helps to prove the existence of a unique solution to problems like \eqref{Optimal_Control_Problem}, but also helps in the algorithmic design of numerical solution methods based on the gradient system \cite{Johansen1997,Daniels2017}.

For the base target, we obtain a solution of the state system \(\widetilde\rho_1\) associated to the known parameter  \( \alpha^* = \begin{psmallmatrix} \nicefrac{7}{100} & \nicefrac{1}{10} & \nicefrac{1}{20} \end{psmallmatrix}^\top \hspace{-0.3em}\) with initial conditions \(\rho_0 = \begin{psmallmatrix} 380 & 20 & 0 \end{psmallmatrix}^\top\), initial population \(\nn = 400\), and final time \(T = 3\). The choice of \(\alpha^*\) and \(\rho_0\) result in an fast epidemiological outbreak of the disease. As a disease of concern, initial estimates of the population in each compartment might be prone to error and re--collection of data is not instantaneous. We emulate this measuring issue by applying the smooth transform
\(
    \widetilde\rho_2 (t) \coloneqq  \widetilde\rho_1(t) + 4( \sin \widetilde\rho_1(t) - \sin \rho_0 ).
\)
Then we compute the rolling average of \( \widetilde\rho_2\)
on $k = 50$ uniform subintervals of $[0,T]$, yielding a vector \(p_2\). The resulting target is the semicontinuous piecewise constant function given by the entries of \(p_2\); i.e., 
\(
    \widehat{\rho}_{\text b} = p_3(t) = \sum\limits_{i=0}^{k-1} (p_2)_i \, \chi_{ \, t_{i} \leq t < t_{i+1}  } + (p_2)_{k-1} \, \chi_{t=T}.
\)

The sensitivity \eqref{eq:SIRD_Sensitivity} associated with the baseline parameter \(\alpha^*\) is given by the vector \( \Phi(\mathcal{R}_0 | \alpha^*) = \begin{psmallmatrix} 1 & -\nicefrac{2}{3} & -\nicefrac{1}{3}\end{psmallmatrix}^\top\). Thus, in this case the mortality rate has a significant impact on the evolution of the disease and so we select \( \AcalV = \Acal\) and \( \AcalF = \varnothing\).

In all tests, \(200\) Chebyshev points are used inside the interval \( [0,T]\), plus the two endpoints. 
The selected target suggests that an optimal fit must present an outbreak of the disease in finite time.
The initial guess \(\alpha_0\) is always set to \( \begin{psmallmatrix} \mathtt{0.07364913} & \mathtt{0.0184188} & \mathtt{0.03663371} \end{psmallmatrix}^\top \hspace{-0.3em}\). The selected initial approximation represents a challenge for the optimisation process as this point lies on a flat region of the objective. In contrast to the previous tests and the example in \Cref{fig:Convex_Contours}, the initial point lies in a flat valley away from a sharp minimum.
We set the tolerances for all methods to \( 10^{-13}\). The three first--order methods run for at most  \( \mathrm{it_{\max}} = 10\,000\) iterations, and LM--BFGS is allowed to restart every \(\mathrm{it_{\max}} = 100\) iterations.

Here we focus on the effect of the regularisation parameter \(\theta\) on the choice of the fit \(\rho\), which is controlled by the following objective:
\begin{equation}
\label{eq:Second-Objective}
	J(\rho,\alpha) \coloneqq \frac{1}{2\nn^2} \int\limits_0^T | \rS - \hrS |^2 + |\rI - \hrI|^2 + |\rR - \hrR|^2 \dif t + \frac{\theta}{2} \big[ \beta^2 + \gamma^2 + m^2 \big].
\end{equation}
This choice of payoff is a scaling of \eqref{eq:Second-Objective-r} in the population compartments, which effectively normalises the integral terms.
The associated gradient system is given by:
\begin{equation*}
	\alpha = \proj_{\Acal} \bigg\{ \alpha - \frac{\kappa}{\nn^2}
	\int\limits_0^T
	\begin{psmallmatrix}	
				\rS \rI (\qI - \qS ) 	\\
		\phantom{\rS} \rI (\qR - \qI)     \\
                -\rI \qI
	\end{psmallmatrix} 
	\dif t
 - \kappa \theta \alpha
	\bigg\}
    .
\end{equation*}
We would expect that whenever \(\theta\) is greater than 1, \(\alpha\) would be small and the state curves \(\rho\) would stop fitting the target \(\widehat{\rho}_{\text{b}}\). This is showcased in \Cref{fig:States_Regularisation}, where we include the state curves against each compartment against the target \(\widehat{\rho}_b\) for the best fit, which is obtained using nmAPG. 

\begin{figure}[htbp]
\centering
	\includegraphics[scale=0.4]{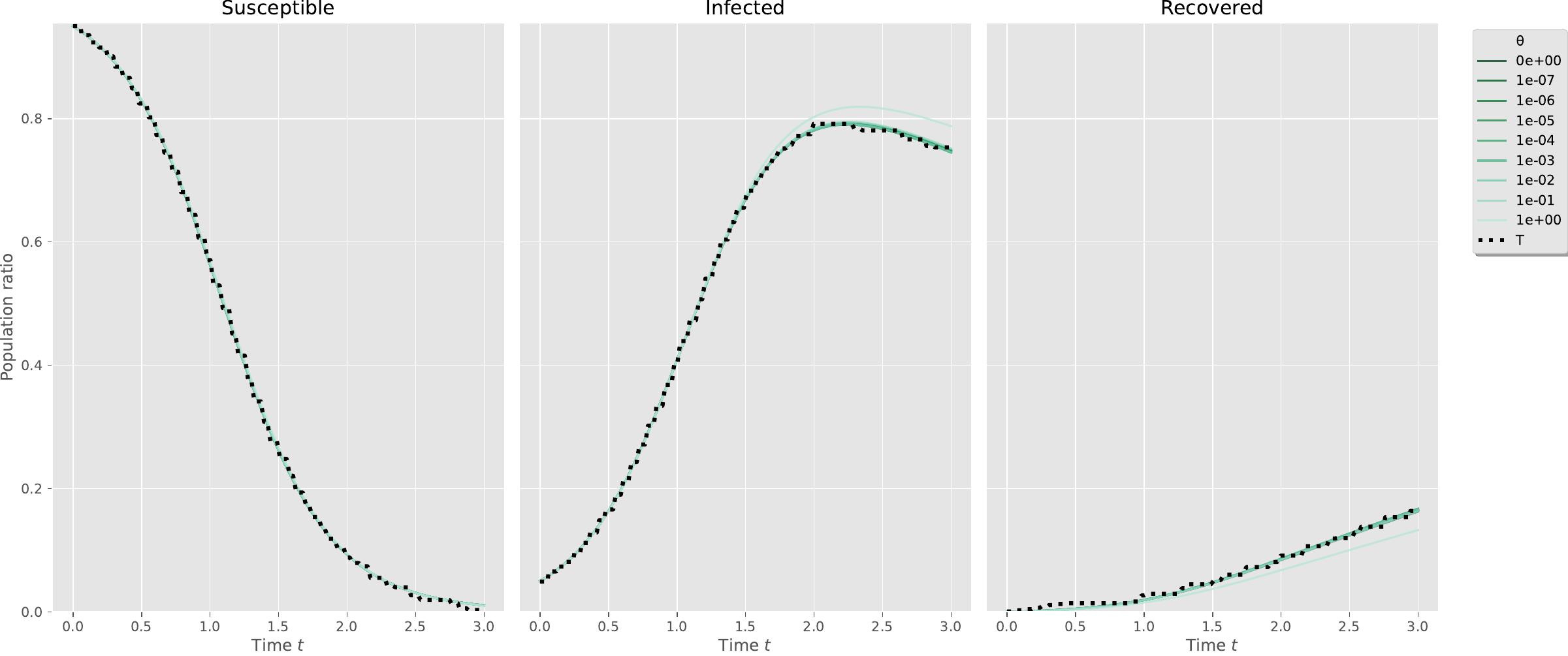}
\caption{State curves for different values of the regularisation parameter \(\theta\). The intensity of the colour of each curve is associated with the value of \(\theta\). Visually, most of the curves overlap, meaning an effective fitting of the data has been achieved.
The \review{dotted} curves in the three panels display the target \(\widehat{\rho}_b\). 
}
\label{fig:States_Regularisation}
\vspace{-0.5\baselineskip}
\end{figure}

In \Cref{tab:Regularisation-Results}, a summary of the results for PGD and \Cref{alg:FISTA-BT,alg:nmAPG-BT,alg:LMBFGS} is presented. The reported values show that FISTA and nmAPG attain similar controls and objective values. Note that we also test the value \(\theta = 0\). Overall, we observe that the best attained objective for this numerical experiment is very similar to the best objective attained for \(\theta \leq 10^{-3}\). As a result, we can conclude that we have reasonable flexibility in the selection of \(\theta\) when approximating the unregularised problem. Regarding PGD, we observe that the algorithm stagnates very early, hence reporting a small number of iterations but high objectives and controls that differ from the approximations obtained with the other two first--order algorithms. 
We observe that LM--BFGS displays a similar behaviour as most of the descent directions are gradient steps and the curvature information is relatively poor. \review{By considering alternative quasi--Newton approaches (e.g., \cite{Kanzow_24}), it is possible to achieve superior error metrics than nmAPG with a second--order method for this example \cite[\S 8]{AMT_Th_2024}.
Finally, we note that the values of the relative gradient norm are larger than for the previous experiment. During the optimisation process, we observe that \( \partial_\beta j (\alpha)\) produces the largest contribution to the gradient norm. This is understandable, as \(\beta\) has the largest sensitivity, and since its values are small, we can expect large gradient values in this direction, corresponding to the fact that the minimiser of \(j\) is sharp.}
%
%
%

\begin{table}[ht]
\centering
\fontsize{9.5}{10.5}\selectfont
\setlength{\tabcolsep}{2.pt}
\def\arraystretch{1.4}
\begin{tabular}{c ccc ccc c ccc}
\toprule
Regularisation & {Iterations} & Time & \multicolumn{3}{c}{Best \(\alpha_{k^*}\)} & Objective & Gradient \\
\cline{4-6} 
\(\theta\) & \(k^*\) & (seconds) &   \(\beta_{k^*}\) & \(\gamma_{k^*}\) & \(m_{k^*}\) & \(j(\alpha_{k^*})\)       &    \(\| \nabla j(\alpha_{k^*}) \|_{2,r}\)       \\
\midrule
	\multirow{4}{*}{0} & 
	11 & 0.24 & 0.006795 & 0.045854 & 0.058053 & 0.004794 & 0.286195
	\\
         & 
         683 & 12.98 & 0.007011 & 0.104485 & 0.050259 & 0.000182 & 0.007826
    \\
        & 
	412 & 13.83 & 0.007000 & 0.101248 & 0.049051 & \bf 0.000181 & 0.040392
         \\
         & 
	8 & 1.20 & 0.007048 & 0.092515 & 0.067470 & 0.000332 & 0.073289
	\\
	\hline
	\multirow{4}{*}{\(10^{-7}\)} & 
	11 & 0.25 & 0.006796 & 0.045860 & 0.058058 & 0.004793 & 0.343099
	\\
         & 
         531 & 10.62 & 0.007015 & 0.101494 & 0.048307 & 0.000186 & 1.017082
    \\
        & 
	488 & 18.74 & 0.007004 & 0.103370 & 0.048071 & \bf 0.000176 & 0.042219
         \\
         & 
	5 & 2.26 & 0.006997 & 0.084613 & 0.063076 & 0.000496 & 0.141659
	\\
	\hline
	\multirow{4}{*}{\(10^{-5}\)} & 
	11 & 0.25 & 0.006796 & 0.045861 & 0.058060 & 0.004792 & 0.358831
	\\
         & 
         972 & 17.37 & 0.007043 & 0.101730 & 0.053628 & 0.000191 & 0.290449
    \\
        & 
	464 & 17.62 & 0.006995 & 0.102149 & 0.047266 & \bf 0.000182 & 0.058861
         \\
         & 
	5 & 1.84 & 0.007050 & 0.090543 & 0.069421 & 0.000374 & 0.145930
	\\
	\hline
	\multirow{4}{*}{\(10^{-3}\)} & 
	11 & 0.25 & 0.006795 & 0.045862 & 0.058060 & 0.004795 & 0.318434
	\\
         & 
         520 & 10.15 & 0.006988 & 0.100474 & 0.047756 & 0.000199 & 0.903052
	\\ & 
    842 & 50.61 & 0.007007 & 0.103080 & 0.049019 & \bf 0.000182 & 0.029610
         \\
         & 
	58 & 3.68 & 0.007067 & 0.093155 & 0.071243 & 0.000393 & 0.092238
	\\
	\hline
	\multirow{4}{*}{\(10^{-1}\)} & 
	27 & 0.36 & 0.006802 & 0.045789 & 0.057926 & 0.005089 & 1.471839
	\\
         & 
         601 & 9.21 & 0.007036 & 0.094526 & 0.055433 & 0.000862 & 0.916192
    \\
        & 
	481 & 17.65 & 0.006990 & 0.101156 & 0.046518 & \bf 0.000815 & 0.060476
         \\
         & 
	9 & 4.34 & 0.007021 & 0.075050 & 0.075944 & 0.001464 & 0.075740
	\\
	\hline
	\multirow{4}{*}{\(1\)} & 
	12 & 0.21 & 0.006767 & 0.044791 & 0.056617 & 0.007728 & 0.221842
	\\
         & 
         594 & 8.22 & 0.006867 & 0.073698 & 0.043428 & 0.005488 & 1.085772
    \\
        & 
	3196 & 82.32 & 0.006889 & 0.080538 & 0.042268 & \bf 0.005388 & 0.641830
         \\
         & 
	71 & 2.34 & 0.006714 & 0.042604 & 0.046995 & 0.008392 & 0.116225
	\\
\bottomrule
\end{tabular}
\caption{Optimisation results for the  optimal control problem \eqref{Optimal_Control_Problem} with cost functional \eqref{eq:Second-Objective} applying PGD and \Cref{alg:FISTA-BT,alg:nmAPG-BT,alg:LMBFGS}. The index \(k^*\) is the one corresponding to the smallest objective value.
\review{The best objective values are highlighted in \textbf{bold}.}
}
\label{tab:Regularisation-Results}
\end{table}

With this example, we have showcased the effects of the regularisation parameter \(\theta\) when solving the optimal control problem \eqref{Optimal_Control_Problem} with the objective function \eqref{eq:Second-Objective}. We have observed a robust behaviour in terms of the controls and objective values such that the attained minimisers approximate the minimisers of the unregularised problem. Moreover, we observe that as \(\theta\) grows, the controls become smaller at the expense of the fit. As a final note, we mention that we could select a different cost or penalty term in the objective function for each coordinate of the control. However, this change would not drastically change our results as these weights should be small to allow good fitting of the state variables.

\subsection{Data--driven parameter identification}\label{sec:DD_PI}

For our last set of experiments, we study the fitting of the SIRD model against COVID--19 case data from Singapore. 
In particular, we use the data from \cite{Mu2023} that fits a modified SEIR model which includes quarantining, provision of vaccines, and booster shots. 
We focus on the time period March 1 to April 30, 2020, which was before the widespread setup of vaccination campaigns. 
For this, we merge the Infected compartment subcategories (asymptomatic, mild, and severe). We caution that this choice, as well as the limitations of the data available at the start of the pandemic, results in a less general model; this is designed solely as a proof--of--concept for applying our methodology to real--world data.

As time evolved, better measurements were available, hence it makes sense to also emphasise the fitting at the final time. As a result, we can consider the following objective with a terminal cost:
\begin{equation}
\label{eq:Third-Objective-a}
	\widehat{J}(\rho,\alphaV) \coloneqq \frac{1}{2} \int\limits_0^T \| \rho(t) -  \widehat{\rho}_{\text c} (t) \|_{\R^3}^2 \dif t + \frac{1}{2} \| \theta \review{\,\odot\,} \alphaV \|^2_{\R^{s_{\textrm{v}}} } + \frac{1}{2} \| \vartheta \review{\,\odot\,} \big(\rho(T) -  \widehat{\rho}_{\text c} (T) \big) \|^2_{\R^3},
\end{equation}
where \( \theta, \vartheta \in \R^{s_\mathsf{v}}_{\geq 0} \) are regularisation vectors\review{, and \(\odot\) again denotes the Hadamard product}.
In this case, we assume that no parameter information is known besides the following: (a) social contact was low due to distancing advice, (b) there was possible time dependence in the parameters, (c) there were no significant population changes due to additional factors (e.g., additional births, fatalities, or migration).
The baseline assumption leads us to consider the choice \( \AcalV = \Acal\) and \( \AcalF = \varnothing\).
Assumption (a) allows us to limit the search space of \(\beta\) to the smaller set \( [0,10^{-2}]\). Additionally, assumption (b) suggests we extend our parameter space to measurable functions in \( L^\infty(0,T) \). Hence we consider 
\(
    \Acal = \{ \beta \in L^2(0,T): 0 \leq \beta \leq 10^{-2} \} \times \{ (\gamma,m) \in [L^2(0,T)]^2: 0 \leq \gamma,m \leq 1 \}.
\)
In such case, most of the mathematical analysis that we presented in the previous sections follows by applying Carathéodory theory for ODEs, although solutions can certainly display less regularity. Even though the adjoint system remains the same, the gradient system now displays pointwise information to be understood in a positive--measure (or \emph{almost everywhere}) sense:
\begin{equation*}
	\alpha(t) = \proj_{\Acal} \bigg\{ \alpha(t) - \kappa
	\begin{psmallmatrix}	
				\rS \rI (\qI - \qS ) 	\\
		\phantom{\rS} \rI (\qR - \qI)     \\
                -\rI \qI
	\end{psmallmatrix} 
 - \kappa \pd{r}{\alpha}(t)
	\bigg\}
    ,
\end{equation*}
where \(r\) is the runoff cost associated with \eqref{eq:Third-Objective-a}; see \eqref{eq:Third-Objective-r}. 
Although the use of time--dependent controls can provide a more complete understanding of the underlying mechanisms of contact and changes in recovery and mortality, possible bang--bang solutions can result in parameters that lack epidemiological significance. In particular, we should prevent controls for which $\gamma+m > 1$. Within our framework, we can do so by using an additional penalty function $H$ that continuously penalises pairs of controls as follows: first setting $H_p(s) = \max\{0, \operatorname{sign}(s) s^2\}$, which has a continuous derivative $H_p'(s) = \max\{0, 2s\} $, we define $H(\alpha) = \upsilon H_p( \gamma + m - 1 )$ for a penalty parameter $\upsilon > 0$. Then, we consider the following tracking cost:
\begin{equation}
\label{eq:Third-Objective-b}
	J(\rho,\alpha) \coloneqq  \int\limits_0^T \frac{1}{2} \| \rho(t) -  \widehat{\rho}_{\text c} (t) \|_{\R^3}^2 
    + \frac{1}{2} \| \theta \review{\,\odot\,} \alpha (t) \|^2_{\R^3} 
    + H\big(\alpha(t)\big) \dif t + \frac{1}{2} \| \vartheta \review{\,\odot\,} \big(\rho(T) -  \widehat{\rho}_{\text c} (T) \big) \|^2_{\R^3},
\end{equation}
which also yields
\(
    \pd{r}{\alpha}(t) = \theta^2 \review{\,\odot\,} \alpha(t) + 2\upsilon \max\{0,\gamma(t) + m(t) - 1\} 
    \begin{psmallmatrix}
        0 & 1 & 1
    \end{psmallmatrix}^\top \hspace{-0.3em}.
\)
Notice that the boundedness of \(\alpha\) also implies that \(H\) and its derivative are bounded, hence the existence theory remains unchanged.
Moreover, the nonzero terminal condition of the adjoint is given by \( q(T) = \vartheta^2 \review{\,\odot\,} \big[ \rho(T) - \widehat{\rho} (T) \big] \).

In all tests, for illustrative purposes we scale time by the number of weeks associated with the data, i.e., \(T \approx 8.57\), and scale the total population by tens of thousands. The selected starting date yields the initial (scaled) population \( \nn \approx 585\). 
Once more, \(200\) Chebyshev points were used inside the interval \( [0,T]\), plus the two endpoints. 
We set the tolerances for all methods to \( 10^{-13}\). 
The three first--order methods run for at most  \( \mathrm{it_{\max}} = 5\,000\) iterations, and LM--BFGS is allowed to restart every \(\mathrm{it_{\max}} = 100\) iterations.
For the regularisation weights, we considered the vectors \( \theta^2 = \begin{psmallmatrix}  \num{e-6} & \num{e-8} & \num{e-9}  \end{psmallmatrix}^\top \hspace{-0.3em} \)  and \( \vartheta^2 = \begin{psmallmatrix}  \num{e-4} & \num{e-4} & \num{5e+2}  \end{psmallmatrix}^\top  \hspace{-0.3em} \),
proportionally scaled with respect to \(\nn\). The former choice reflects on the expected order of magnitude of each parameter, while the latter puts emphasis on penalising deviations from the recovered compartment.

\begin{table}[h]
\centering
\fontsize{9.5}{10.5}\selectfont
\setlength{\tabcolsep}{2.pt}
\def\arraystretch{1.4}
\begin{tabular}{ccc cc c ccc}
\toprule
\multirow{2}{*}{Algorithm} & {Iteration} & Time & Objective & Gradient \\
 & count  & (seconds)  &   \(j(\alpha_{k^*})\)       &    \(\| \nabla j(\alpha_{k^*}) \|_{2,r}\)       \\
\midrule
	PGD 	
	& 89       & 1.02   & \num{3.3 e-6} 	& \num{4.5e-4} 
	\\
    FISTA       
    & 772 (625)   & 11.69    & \num{9.8e-07} & \num{3.9e-4}
    \\
    nmAPG
    & 1020 (648)  & 22.28 & \(\mathbf{5.5 \times 10^{-8}}\) & \num{2.9e-4}
    \\
    LM--BFGS 
    & 55  & 15.51   & \num{2.0e-07}   & \num{3.8e-4}
	\\
\bottomrule
\end{tabular}
\caption{Optimisation results for the  optimal control problem \eqref{Optimal_Control_Problem} with cost functional \eqref{eq:Third-Objective-b} applying PGD and \Cref{alg:FISTA-BT,alg:nmAPG-BT,alg:LMBFGS}. The index \(k^*\) is the one corresponding to the smallest objective value.
\review{The best objective value is highlighted in \textbf{bold}.}
}
\label{tab:ODE-Optimisation-time-control}
\end{table}


The results of the fitting are presented in \cref{tab:ODE-Optimisation-time-control}. We see that nmAPG is able to obtain the best objective value. Although the resulting iteration count for nmAPG is the largest, the computational time is still less than a minute. 
LM--BFGS stalls due to its globalisation strategy (the gradient step), but obtains a reasonably good solution and the second best objective value. 
Moreover, the objective values do not present any active penalisation terms related to \(H\), and the values of the controls for all algorithms were of similar orders of magnitude.

The best fitting, with respect to the objective cost and in the original units of data recollection, is presented in  \cref{fig:Time_Dependent}.
Here, we observe that the fitted curves follow a similar trajectory as the data. 
Along these lines, we observe that the controls follow suit: as more fatalities are registered, the mortality rate increases, while the recovery rate decreases. 
The contact rate remains almost constant until near the final time when we observe a significant decrease. A possible interpretation is that other non--pharmaceutical measures (e.g., mask usage and social distancing) were imposed during the analysed timeframe. 
Additionally, observe that the recovery rate implied by the data is substantially lower than the likely \emph{true rate} as the reported sizes of the Infected and Recovered compartments are much lower than the ``\emph{true}’’ underlying values, hence the fitting suggests higher mortality (also lower transmission rates) than the actual values associated with the disease.
We also observe that the recovered curve is the furthest from the shape of the registered data. This can be understood from the following observations: (a) real epidemics are expected to follow SIR--based trajectories only locally, (b) several subcompartments of clinical data (that evolve at different time scales) have been grouped into the infected compartment, (c) the data is likely to be statistically incomplete in the sense that information related to the infected and recovered compartments was inaccurate at the start of the pandemic. 
As a result, considering that a perfect fit implicitly assumes accuracy in the data, as well as assuming a perfect model, we cannot expect an exact re--creation. 
On the other hand, the results of the fitting are able to explain the quantitative behaviour of the population model, and the approach can certainly be used to fit other more complete compartmental models. Investigating how to adjust the model to incomplete and/or unreliable data would be a valuable avenue of future work.

\begin{figure}[htbp]
\centering
	\includegraphics[scale=0.4]{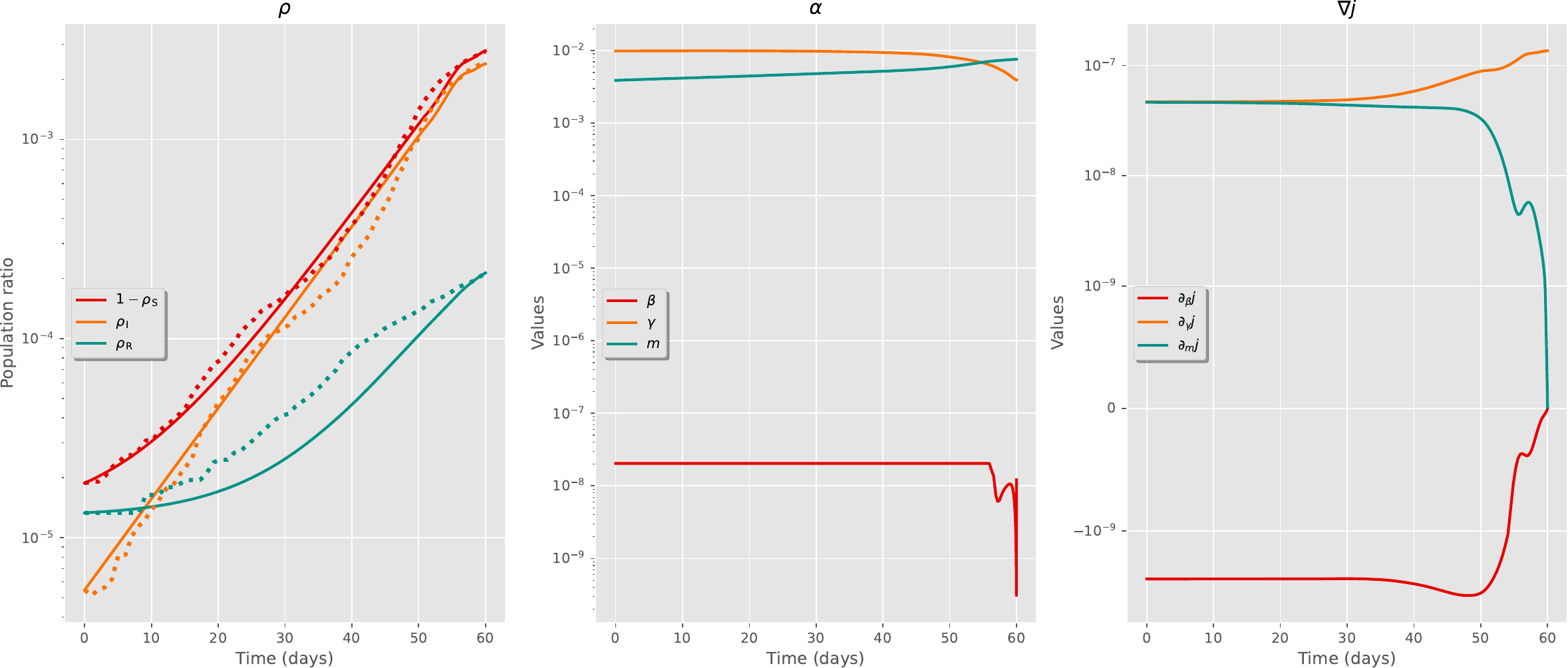}
\caption{Results of the fitting. In all plots, time has been scaled to daily information and \(\alpha\) has been scaled to real population units. \textbf{First panel:} State curves (continuous lines) of best fit against data (dotted line). We report \(1-\rS\) for visualisation purposes. All curves were normalised by \(\nn\) and are displayed in logarithmic scale. \textbf{Second panel:} Obtained controls (in logarithmic scale).  \textbf{Third panel:} Gradient curves associated with each time--dependent parameter.
}
\label{fig:Time_Dependent}
\end{figure}
%


\section{Conclusions}
\label{sec:conclusions}

In this work we have developed a systematic treatment of a parameter identification problem for the SIRD model \eqref{Optimal_Control_Problem}. We began studying the well--posedness of the forward problem \eqref{eq:ode-sys}, proving existence of solutions for fixed parameters in \Cref{th:state_existence} and qualitative properties such as non--negativity and boundedness in \Cref{Th:ODE_Qualitative}. We then demonstrated the existence of optimal calibration parameters in \Cref{Th:Admissibility}, which allow SIRD--based models of the form \eqref{sys:SIRD} to be parameterised based on target, or experimental, data. The next key development was a practical methodology to determine these optimal parameters. To provide this, we took a general approach in deriving first--order optimality conditions for problem \eqref{Optimal_Control_Problem}, described by the state, adjoint, and gradient systems in \Cref{th:Optimality-System}.  In principle, this route can also be taken with more complex compartmental systems that are of a similar form to \eqref{eq:ode-sys}, such as those in \cite{Cheng2023,Elzinga2023,Moffett2023,Penn2023,Viscardi2023,CuevasMaraver2024,Barua2023}.  This would be an interesting topic of future work.

In order to address such problems in practice, we required an accurate, robust, and efficient numerical scheme to solve the problem \eqref{Optimal_Control_Problem}. In \Cref{Sec:Algorithms}, we described four such optimisation algorithms: PGD, FISTA, nmAPG, and LM--BFGS. PGD and FISTA are easy--to--implement methods that can be used to obtain fast approximations, but the lack of convexity in the reduced cost functional limited the performance of the algorithms for the problems studied here. By contrast, nmAPG and LM--BFGS are more specialised optimisation methods, which require more effort to implement.  In the examples studied here we found that LM--BFGS generally outperformed both PGD and FISTA in terms of minimising the objective function and the gradient. The nmAPG approach was found to be competitive with LM--BFGS, although it required many more iterations, so it may not be an appropriate choice when the cost of each iteration is larger than for problems studied here. Overall, the numerical experiments in \Cref{sec:numerics} encompassed a range of examples that displayed the robustness of our method for parameter identification, and described effective optimisation methods for resolving such problems numerically (which can be tailored to individual problems, including different compartmental systems). Thus, we have provided a pipeline that enables the parametrisation of SIR--type models from data, significantly increasing their potential impact and reducing the need for \emph{ad hoc} modelling choices. 

\review{Although most of the theoretical framework developed in this paper focuses on the study of time--independent parameters, many of our results can be extended for the time--dependent case by means of Carathéodory theory for ODEs and optimisation theory in Banach spaces. In particular, the state and adjoint systems remain unchanged (understanding the involved parameters as time--dependent quantities); however, the gradient system can be described pointwise (almost everywhere) by a \emph{local} operator. The use of local information can ease the evaluation of numerical algorithms (as the absence of nonlocalities allows for alternative numerical approaches). Notwithstanding, such extensions require an additional understanding of the possible variability of the parameters, as the emergence of bang--bang controls can limit the biological significance of the optimised controls. This could be an exciting avenue for future work.}

\review{Further} extensions of this work, beyond increasing the complexity of the system of ODEs, could involve introducing spatial information, such as diffusion or attractive and repelling forces between agents \cite{Vrugt2021,Vrugt2020}. We believe the optimisation framework devised in this work can provide a guide to the efficient numerical simulation of more sophisticated applications such as these.

\section*{Data, code and materials}
Code for the experiments and implemented models is available at
\begin{center}
    \noindent \href{https://github.com/andresrmt/SIRD_Control}{\texttt{https://github.com/andresrmt/SIRD\_Control}}
\end{center}





\begin{small}
\bibliographystyle{elsarticle-num-names}
\bibliography{Main.bbl}
\end{small}

\end{document}